\documentclass[a4paper,12pt,reqno]{amsart}
\usepackage{amsfonts}
\usepackage{amsmath}
\usepackage{amssymb}
\usepackage[a4paper]{geometry}
\usepackage{mathrsfs}

\usepackage[colorlinks]{hyperref}
\renewcommand\eqref[1]{(\ref{#1})} 
%
%
\setlength{\textwidth}{15.2cm}
\setlength{\textheight}{22.7cm}
\setlength{\topmargin}{0mm}
\setlength{\oddsidemargin}{3mm}
\setlength{\evensidemargin}{3mm}
\setlength{\footskip}{1cm}


\numberwithin{equation}{section}
\theoremstyle{plain}
\newtheorem{thm}{Theorem}[section]
\newtheorem{prop}[thm]{Proposition}

\theoremstyle{definition}

\newtheorem{rem}[thm]{Remark}

\newcommand{\Rn}{\mathbb R^{n}}
\newcommand{\G}{\mathbb G}

\def\e[#1]{{\textrm{e}}^{#1}}

\def\G{{\mathbb G}}

\def\L{\mathcal{L}}



\begin{document}

   \title[Rellich inequalities for sub-Laplacians with drift]
   {Rellich inequalities for sub-Laplacians with drift}

\author[M. Ruzhansky]{Michael Ruzhansky}
\address{
  Michael Ruzhansky:
  \endgraf
  Department of Mathematics
  \endgraf
  Imperial College London
  \endgraf
  180 Queen's Gate, London SW7 2AZ
  \endgraf
  United Kingdom
  \endgraf
  {\it E-mail address} {\rm m.ruzhansky@imperial.ac.uk}
  }

\author[N. Yessirkegenov]{Nurgissa Yessirkegenov}
\address{
  Nurgissa Yessirkegenov:
  \endgraf
  Institute of Mathematics and Mathematical Modelling
  \endgraf
  125 Pushkin str.
  \endgraf
  050010 Almaty
  \endgraf
  Kazakhstan
  \endgraf
  and
  \endgraf
  Department of Mathematics
  \endgraf
  Imperial College London
  \endgraf
  180 Queen's Gate, London SW7 2AZ
  \endgraf
  United Kingdom
  \endgraf
  {\it E-mail address} {\rm n.yessirkegenov15@imperial.ac.uk}
  }

\thanks{The authors were supported in parts by the EPSRC Grants EP/K039407/1 and
EP/R003025/1, and by
 the Leverhulme Research Grant RPG-2017-151, as well as by the MESRK grant 0825/GF4. No new data was collected or
generated during the course of research.}

     \keywords{sub-Laplacian, sub-Laplacians with drift, Rellich inequalities, stratified group, polarizable Carnot group}
     \subjclass[2010]{22E30, 43A80}

     \begin{abstract} In this note we prove horizontal weighted Rellich inequalities for the sub-Laplacian and for sub-Laplacians with drift on general stratified groups. 
     We show how the presence of a drift improves the known inequalities.
     Moreover, we obtain several versions of weighted Rellich inequalities for the sub-Laplacian with drift on the polarizable Carnot groups, also with the weights associated with the fundamental solution of the sub-Laplacian. The obtained results are already new for the Laplacian in the usual Euclidean setting of ${\mathbb R}^n$, embedding the classical Rellich inequality into a family of Rellich inequalities with parameter dependent drifts.
     \end{abstract}
     \maketitle

\section{Introduction}
\label{SEC:intro}
Recall the classical Rellich inequality \cite{R56} for $f\in C_{0}^{\infty}(\Rn\backslash\{0\})$:
\begin{equation}\label{intro_Rel}
\int_{\Rn}|\triangle f(x)|^{2}dx\geq\frac{n^{2}(n-4)^{2}}{16}\int_{\Rn}\frac{|f(x)|^{2}}{|x|^{4}}dx, \quad n\neq2,
\end{equation}
where the constant $\frac{n^{2}(n-4)^{2}}{16}$ is sharp. Note that in the case $n=2$ (see \cite{BEL15}) this inequality holds for functions $f\in C_{0}^{\infty}(\mathbb{R}^{2}\backslash\{0\})$ with the condition
$$\int_{0}^{2\pi}f(r,\theta)\cos\theta d\theta=\int_{0}^{2\pi}f(r,\theta)\sin\theta d\theta=0.$$

In this paper we obtain an embedding of the inequality \eqref{intro_Rel} into a family of inequalities associated to (sub-)Laplacians with drifts.
Such operators have recently attracted much attention in the literature in the context of the spectral multiplier estimates (see \cite{HMM04,HMM05} or recent work \cite{MOV17}).

In particular, in the Euclidean setting of $\Rn$ with $n\geq5$, for all
$a_{i}\in\mathbb{R}$ for $i=1,\ldots,n$ and $\gamma\in \mathbb{R}$, and all
$f\in C_{0}^{\infty}(\Rn\backslash\{0\})$ we prove a new family of inequalities
\begin{multline}\label{intro:Rellich_strat_dr1}
\left\|\left(\triangle+\gamma\sum_{i=1}^{n}a_{i}\partial_{x_{i}}\right) f\right\|^{2}_{L^{2}(\Rn,\mu_{X})}
\geq
\frac{n^2(n-4)^2}{16}\left\|\frac{f}{|x|^{2}}\right\|^{2}_{L^{2}(\Rn,\mu_{X})} \\
+ \gamma^{4}b_{X}^{4}\|f\|^{2}_{L^{2}(\Rn,\mu_{X})}
+\gamma^{2}b_{X}^{2}\frac{(n-2)^{2}}{2}\left\|\frac{f}{|x|}\right\|^{2}_{L^{2}(\Rn,\mu_{X})},
\end{multline}
with the measure $\mu_{X}$ on $\Rn$ given by $d\mu_{X}=e^{-\gamma \sum_{i=1}^{n}a_{i}x_{i}}dx$, where $dx$ is the Lebesgue measure, and $b_{X}=\frac{1}{2}\left(\sum_{j=1}^{n} a_{j}^{2}\right)^{1/2}$. All the constants in \eqref{intro:Rellich_strat_dr1} are sharp
in the sense that there is a sequence of functions such that the equality in \eqref{intro:Rellich_strat_dr1} is attained in the limit of this sequence of functions.

To summarise this and the other results of this paper:
\begin{itemize}
\item the inequality \eqref{intro:Rellich_strat_dr1} reduces to the classical Rellich inequality \eqref{intro_Rel} for $\gamma=0$;
\item for $\gamma\not=0$, the drift term $\gamma\sum_{i=1}^{n}a_{i}\partial_{x_{i}}$, although being of low order, brings an improvement to the Rellich inequality in the form of the addition of two positive terms in \eqref{intro:Rellich_strat_dr1} compared to \eqref{intro_Rel}. Given the drift, the choice of the measure $\mu_{X}$ is essentially unique to make the operator self-adjoint;
\item in addition to \eqref{intro:Rellich_strat_dr1}, we can obtain the weighted family of such inequalities, see \eqref{intro:Rellich_pol};
\item moreover, we obtain a family of horizontal Rellich inequalities for sub-Lap\-la\-cians  with drift on general stratified Lie groups (see Theorem \ref{Rellich_thm_dr}). The starting point for this analysis will be a new weighted Rellich inequality without drift with horizontal weights that we obtain in Theorem \ref{Rellich_thm};
\item in the case of polarizable Carnot groups we obtain a family of Rellich inequalities for the sub-Laplacians with drift for weights also expressed in terms of the fundamental solution of the sub-Laplacian (Theorem \ref{Rellich_thm_pol}).
\end{itemize}

To put the weighted Rellich inequalities in perspective we can recall the following $L^{p}$-Rellich inequality that was obtained by Davies and Hinz \cite{DH98}: Let $V$ be a positive function with the conditions $\triangle V<0$ and $\triangle(V^{\alpha})\leq0$ and, let $\Omega$ be a proper, non-empty open subset of $\Rn$. Then for $a>1$, $p\in (1,\infty)$, and for all functions $f\in C_{0}^{\infty}(\Omega)$, we have
\begin{equation}\label{intro_DH_ineq1}
\int_{\Omega}|\triangle V(x)||f(x)|^{p}dx\leq\left(\frac{p^{2}}{(p-1)a+1}\right)^{p}\int_{\Omega}\frac{(V(x))^{p}}{|\triangle V(x)|^{p-1}}|\triangle f(x)|^{p}dx.
\end{equation}
In the case $\Omega=\Rn\backslash\{0\}$, $a=\frac{n-2}{2\alpha+2}$, $-1<\alpha<\frac{n-4}{2}$, and $V(x)=|x|^{-2\alpha-2}$, the following weighted inequality holds for any function $f\in C_{0}^{\infty}(\Rn\backslash\{0\})$:
\begin{equation}\label{intro_DH_ineq2}
\int_{\Rn}\frac{|f(x)|^{p}}{|x|^{2\alpha+4}}dx\leq\left(\frac{p^{2}}{(n-2\alpha-4)((p-1)n+2\alpha+4-2p)}\right)^{p}\int_{\Omega}\frac{|\triangle f(x)|^{p}}{|x|^{2\alpha+4-2p}}dx.
\end{equation}
We refer to \cite[Corollary 6.2.7]{BEL15} and \cite{EE16} for a Rellich-type inequality where the weight is a mean distance function, and to \cite{Y08} for inequalities on the Heisenberg group, \cite{YH10} on H-type groups, \cite{RS17c} for Rellich inequalities for H\"ormander's sums of squares of vector fields, \cite{SS17} and \cite{RS17a} for Hardy-Rellich inequalities on stratified groups, \cite{K10}, \cite{K06}, \cite{JS11} and \cite{L13} on general Carnot groups with weights associated with the fundamental solution of the sub-Laplacian, and to \cite{RSY16} and \cite{RS16} on general homogeneous groups. See also \cite{AGS06}, \cite{B07}, \cite{BT06} and \cite{TZ07} as well as references therein.

In this paper, we show the weighted Rellich inequalities for the sub-Laplacian with drift. In the case of the Laplacian on $\Rn$ this yields the following new family of inequalities: Let $n\geq3$, $a_{i}\in\mathbb{R}$ for $i=1,\ldots,n$ and let $\alpha,\gamma\in \mathbb{R}$ with $n+2\alpha-4>0$. Then for all functions $f\in C_{0}^{\infty}(\Rn\backslash\{0\})$ we have
$$\left\||x|^{\alpha}\left(\triangle+\gamma\sum_{i=1}^{n}a_{i}\partial_{x_{i}}\right) f\right\|^{2}_{L^{2}(\Rn,\mu_{X})}\geq
\frac{(n+2\alpha-4)^{2}(n-2\alpha)^{2}}{16}\left\||x|^{\alpha-2}f\right\|^{2}_{L^{2}(\Rn,\mu_{X})}$$
\begin{equation}\label{intro:Rellich_pol}
+\gamma^{2}b_{X}^{2}\frac{(n+2\alpha-2)(n-2\alpha-2)}{2}\left\||x|^{\alpha-1}f\right\|^{2}_{L^{2}(\Rn,\mu_{X})}
+\gamma^{4}b_{X}^{4}\left\||x|^{\alpha}f\right\|^{2}_{L^{2}(\Rn,\mu_{X})},
\end{equation}
with the same notations  $\mu_X$ and $b_X$ as in \eqref{intro:Rellich_strat_dr1}.
If $(n-2\alpha)(n-2\alpha-2)\neq0$ with $\alpha\geq1$ and $n\geq2\alpha$, then the constants in \eqref{intro:Rellich_pol} are sharp, in the sense that there is a sequence of functions such that the equality in \eqref{intro:Rellich_pol} is attained in the limit of this sequence of functions.

The plan of the paper as follows. In Section \ref{SEC:prelim} we briefly recall the main concepts of stratified groups, and fix the notation.
In Section \ref{SEC:straf} we show \eqref{intro:Rellich_pol} and \eqref{intro:Rellich_strat_dr1} as a consequence of general results for sub-Laplacians with drift on general stratified groups.
Finally, in Section \ref{SEC:pol} weighted Rellich inequalities \eqref{intro:Rellich_pol} on the polarizable Carnot group are obtained with the weights associated with the fundamental solution of the sub-Laplacian.

\section{Preliminaries}
\label{SEC:prelim}
In this section we briefly recall some notions concerning the setting of stratified groups and discuss some properties of sub-Laplacians with drift.

\subsection{Stratified groups}
We say that a Lie group $\mathbb{G}=(\mathbb{R}^{n}, \circ)$ is a stratified group (or a homogeneous Carnot group) if it satisfies the following:

\begin{itemize}
\item Let $N=N_{1}, N_{2}, \ldots, N_{r}$ be natural numbers such that $N+N_{2}+\ldots+N_{r}=n$. Then  $\mathbb{R}^{n}=\mathbb{R}^{N}\times\ldots\times\mathbb{R}^{N_{r}}$, and for each positive $\lambda$ the dilation $\delta_{\lambda}:\mathbb{R}^{n}\rightarrow\mathbb{R}^{n}$ given by
    $$\delta_{\lambda}(x)=\delta_{\lambda}(x',x^{(2)},\ldots,x^{(r)}):=
    (\lambda x', \lambda^{2}x^{(2)},\ldots,\lambda^{r}x^{(r)})$$ is an automorphism of $\mathbb{G}$, where $x'\equiv x^{(1)}\in\mathbb{R}^{N}$ and $x^{(k)}\in\mathbb{R}^{N_{k}}$ for $k=2,\ldots,r$.
\item For such $N$ as above and for left invariant vector fields $X_{1}, \ldots, X_{N}$ on $\mathbb{G}$ such that $X_{k}(0)=\frac{\partial}{\partial x_{k}}|_{0}$ for $k=1, \ldots, N$, the condition
    $${\rm rank}({\rm Lie}\{X_{1}, \ldots, X_{N}\})=n,$$
is valid, that is, the iterated commutators of $X_{1}, \ldots, X_{N}$ span the Lie algebra of the group $\mathbb{G}$.
\end{itemize}
That is, we say that the triple $\mathbb{G}=(\mathbb{R}^{n}, \circ, \delta_{\lambda})$ is a stratified group, and such groups have been thoroughly investigated by Folland \cite{F75}. We also refer to e.g. \cite{FR16} for more detailed discussions from points of view of groups and span of their Lie algebras.

Here the left invariant vector fields $X_{1}, \ldots, X_{N}$ and $r$ are called (Jacobian) generators of $\mathbb{G}$ and the step of $\mathbb{G}$, respectively. The homogeneous dimension of $\mathbb{G}$ is defined by
$$Q:=\sum_{k=1}^{r}kN_{k}, \;\; N_{1}=N.$$
Let us also recall the fact that the standard Lebesgue measure $dx$ on $\mathbb{R}^{n}$ is the Haar measure for $\mathbb{G}$ (see, e.g. \cite[Proposition 1.6.6]{FR16}). The (canonical) sub-Laplacian on the stratified group $\G$ is defined by
\begin{equation}\label{L}
\L=\sum_{k=1}^{N}X_{k}^{2}.
\end{equation}
The left invariant vector fields $X_{j}$ have an explicit form and satisfy the divergence theorem, see e.g. \cite{RS17b} for the derivation of the exact formula: more precisely, we have
\begin{equation}\label{Xk}
X_{k}=\frac{\partial}{\partial x'_{k}}+\sum_{\ell=2}^{r}\sum_{m=1}^{N_{1}}a_{k,m}^{(\ell)}
(x', \ldots, x^{\ell-1})\frac{\partial}{\partial x_{m}^{(\ell)}},
\end{equation}
see also \cite[Section 3.1.5]{FR16} for a general presentation.

Since we have these explicit representations of the left invariant vector fields $X_{j}$ \eqref{Xk}, one can obtain the identities for any $b \in \mathbb{R}$ and $|x'|\neq0$,
\begin{equation}\label{formula1}
|\nabla_{H}|x'|^{b}|=b|x'|^{b-1},
\end{equation}
and
\begin{equation}\label{formula2}
{\rm div}_{H}\left(\frac{x'}{|x'|^{b}}\right)=\frac{\sum_{j=1}^{N}|x'|^{b}X_{j}x'_{j}-
\sum_{j=1}^{N}x'_{j}b|x'|^{b-1}X_{j}|x'|}{|x'|^{2b}}=\frac{N-b}{|x'|^{b}},
\end{equation}
where $\nabla_{H}:=(X_{1}, \ldots, X_{N})$ is the horizontal gradient, ${\rm div}_{H}\upsilon:=\nabla_{H}\cdot \upsilon$ is the horizontal divergence, and $|x'|=\sqrt{x_{1}^{'2}+\ldots+x_{N}^{'2}}$ is the Euclidean norm on $\mathbb{R}^{N}$.

\subsection{Polarizable Carnot groups}
Let us denote by $u$ the fundamental solution of the sub-Laplacian $\L$, that is
$$-\L u=\delta,$$
where $\delta$ is the Dirac distribution with singularity at the identity element $0$ of $\G$. Then, we know from \cite{F75} that there exists a homogeneous quasi-norm $d$ such that
$$u=d^{2-Q}.$$
So we set
\begin{equation}\label{d}
d(x):=\begin{cases}
(u(x))^{\frac{1}{2-Q}}, \;\;{\rm if}\;\; x\neq0;\\
0, \;\; {\rm if}\;\; x=0.
\end{cases}
\end{equation}
We say that a Carnot group $\G$ (or a stratified Lie group) is polarizable if the homogeneous norm $d=u^{1/(2-Q)}$ satisfies the $\infty$-sub-Laplace equation (see e.g. \cite{BT02})
$$\L_{\G,\infty}d:=\frac{1}{2}\langle\nabla_{H}(|\nabla_{H}d|^{2}),\nabla_{H}d\rangle=0\;\;{\rm in} \;\;\G\backslash\{0\},$$
which implies that (see e.g. \cite[Proposition 2.20]{BT02})
$$\L_{\G,\infty}u=\frac{Q-1}{Q-2}\frac{|\nabla_{H}u|^{4}}{u},$$
or we can write this as
\begin{equation}\label{identity_polar}
\sum_{j=1}^{N}\frac{uX_{j}uX_{j}|\nabla_{H}u|}{|\nabla_{H}u|^{3}}=\frac{Q-1}{Q-2}.
\end{equation}
The Euclidean space, the Heisenberg group and the H-type groups are examples of polarizable Carnot groups.

\subsection{Sub-Laplacians with drift}
Now we briefly recall some necessary facts about sub-Laplacians with drift. Consider on $C_{0}^{\infty}(\mathbb{G})$ the operator
\begin{equation}\label{sub-lap_drift}
\L_{X}:=-\sum_{i,j=1}^{N}a_{i,j}X_{i}X_{j}-\gamma X,
\end{equation}
where $\gamma\in\mathbb{R}$, the matrix $(a_{i,j})_{i,j=1}^{N}$ is real, symmetric, positive definite, and $X\in\mathfrak{g}$ is a left-invariant vector field on $\G$. We can reduce the representation \eqref{sub-lap_drift} to the following form (see e.g. \cite{HMM05} or recent work \cite{MOV17})
\begin{equation}\label{sub-lap_drift1}
\L_{X}=-\sum_{j=1}^{N}X^{2}_{j}-\gamma X:=\L_{0}-\gamma X,
\end{equation}
where $\L_{0}$ is the positive sub-Laplacian on $\G$ defined by
\begin{equation}\label{L0}
\L_{0}=-\sum_{j=1}^{N}X_{j}^{2}.
\end{equation}
If $X=\sum_{j=1}^{N}a_{j}X_{j}$, then we denote $\|X\|:=\left(\sum_{j=1}^{N} a_{j}^{2}\right)^{1/2}$ and
\begin{equation}\label{bX}
b_{X}:=\frac{\|X\|}{2},
\end{equation}
where $a_{i}\in\mathbb{R}$ for $i=1,\ldots,n$.

We will need an analogous result to \cite[Proposition 3.1]{HMM05} for the operator $\L_{X}$, where the case $\gamma=1$ was considered:
\begin{prop}\label{prop_drifted}
Let $\G$ be a connected Lie group, $X_{1},\ldots,X_{N}$ an algebraic basis of $\mathfrak{g}$ and let $X\in\mathfrak{g}\backslash\{0\}$. Let $\gamma\in\mathbb{R}$. Then we have for the operator $\L_{X}$, with domain $C_{0}^{\infty}(\G)$, the following properties:
\begin{itemize}
\item the operator $\L_{X}$ is symmetric on $L^{2}(\G,\mu)$ for some positive measure $\mu$ on $\G$ if and only if there exists a positive character $\chi$ of $\G$ and a constant $C$ such that $\mu=C \mu_{X}$ and $\gamma\nabla_{H}\chi |_{e}=X|_{e}$, where $\mu_{X}$ is the measure absolutely continuous with respect to the Haar measure $\mu$ with density $\chi$;
\item assume that $\gamma\nabla_{H}\chi |_{e}=X|_{e}$ for some positive character $\chi$ of $\G$. Then the operator $\L_{X}$ is essentially self-adjoint on $L^{2}(\G,\mu_{X})$ and its spectrum is contained in the interval $[\gamma^{2}b_{X}^{2},\infty)$.
\end{itemize}
\end{prop}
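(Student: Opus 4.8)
The plan is to follow the strategy of \cite[Proposition 3.1]{HMM05}, tracking the parameter $\gamma$ throughout; the heart of the matter is a ground-state (Doob) transform that conjugates $\L_{X}$ to the drift-free operator $\L_{0}$ shifted by a constant, which simultaneously yields essential self-adjointness and the spectral bound.

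For the first assertion, write a candidate symmetrizing measure as $d\mu=w\,dx$ with $w>0$ smooth, where $dx$ is the Haar measure. Since the $X_{j}$ are left-invariant they are skew-adjoint with respect to $dx$ (on a stratified group $dx$ is bi-invariant; in the general setting one inserts the modular function exactly as in \cite{HMM05}), so integrating by parts gives, for all $f,g\in C_{0}^{\infty}(\G)$,
\[
\int_{\G}(\L_{X}f)\,g\,w\,dx=\int_{\G}f\,\big[\L_{0}(gw)+\gamma X(gw)\big]\,dx .
\]
Comparing this with $\int_{\G}f\,(\L_{X}g)\,w\,dx$ and expanding by the Leibniz rule for $\L_{0}=-\sum_{j}X_{j}^{2}$ and for $X=\sum_{j}a_{j}X_{j}$, the terms containing $\L_{0}g$ cancel, and the requirement that the remainder vanish for every $g$ splits into the first-order condition $X_{j}w=\gamma a_{j}w$ for each $j$ together with the zeroth-order condition $\L_{0}w+\gamma Xw=0$. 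The latter is automatic once the former holds, since $X_{j}w=\gamma a_{j}w$ forces $Xw=\gamma\|X\|^{2}w$ and $\L_{0}w=-\gamma^{2}\|X\|^{2}w$. Thus $\L_{X}$ is symmetric on $L^{2}(\G,\mu)$ if and only if $X_{j}\log w=\gamma a_{j}$ for all $j$. As these horizontal logarithmic derivatives are constant, $w$ must be a constant multiple of a positive character $\chi$ of $\G$ satisfying $X_{j}\chi=\gamma a_{j}\chi$ (equivalently $\nabla_{H}\chi|_{e}=\gamma\,X|_{e}$), and conversely any such $\chi$ produces a symmetrizing measure; this is precisely $\mu=C\mu_{X}$ with the stated relation between $X$ and $\chi$.

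For the second assertion, assume such a $\chi$ exists and set $\psi=\chi^{1/2}$, so that $X_{j}\psi=\tfrac{\gamma a_{j}}{2}\psi$. The map $Uf=\psi^{-1}f$, or rather $U^{-1}h=\psi^{-1}h$, realizes a unitary $U\colon L^{2}(\G,\mu_{X})\to L^{2}(\G,dx)$, $Uf=\psi f$, because $\int_{\G}|\psi f|^{2}\,dx=\int_{\G}|f|^{2}\,d\mu_{X}$. A direct computation using only $X_{j}\psi=\tfrac{\gamma a_{j}}{2}\psi$ and the Leibniz rule (no integration by parts is needed) gives, for $h\in C_{0}^{\infty}(\G)$,
\[
\L_{X}(\psi^{-1}h)=\psi^{-1}\Big(\L_{0}h+\tfrac{\gamma^{2}\|X\|^{2}}{4}\,h\Big),
\]
whence
\[
U\,\L_{X}\,U^{-1}=\L_{0}+\gamma^{2}b_{X}^{2}\qquad\text{on }C_{0}^{\infty}(\G),
\]
with $b_{X}=\|X\|/2$. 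Since $\L_{0}$ is nonnegative and essentially self-adjoint on $C_{0}^{\infty}(\G)\subset L^{2}(\G,dx)$ with $\sigma(\L_{0})\subseteq[0,\infty)$, and $U$ is unitary and maps the core $C_{0}^{\infty}(\G)$ onto itself (multiplication by the smooth nonvanishing function $\psi$ preserves $C_{0}^{\infty}(\G)$), the operator $\L_{X}$ is essentially self-adjoint on $C_{0}^{\infty}(\G)\subset L^{2}(\G,\mu_{X})$ and unitarily equivalent to $\L_{0}+\gamma^{2}b_{X}^{2}$. Consequently $\sigma(\L_{X})=\sigma(\L_{0})+\gamma^{2}b_{X}^{2}\subseteq[\gamma^{2}b_{X}^{2},\infty)$.

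The computational core is the conjugation identity displayed above, a purely algebraic consequence of $X_{j}\psi=\tfrac{\gamma a_{j}}{2}\psi$. I expect the two points requiring genuine care to be the \emph{only if} direction of the first part—ruling out any symmetrizing measure outside the family $C\mu_{X}$, which is exactly the content of the forced identity $X_{j}\log w=\gamma a_{j}$—and, in the general non-unimodular Lie group setting, the correct bookkeeping of the modular function in the integration by parts, which is handled as in \cite{HMM05}; on the stratified groups relevant to this paper $dx$ is bi-invariant and this difficulty disappears. Everything else reduces to the Leibniz rule together with the known essential self-adjointness of the sub-Laplacian $\L_{0}$ on $C_{0}^{\infty}(\G)$.
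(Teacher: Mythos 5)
Your overall route is the same as the paper's: both adapt the proof of \cite[Proposition 3.1]{HMM05} to general $\gamma$, and your treatment of the second bullet --- the unitary $f\mapsto\chi^{1/2}f$, the conjugation identity $\chi^{1/2}\L_{X}(\chi^{-1/2}h)=\L_{0}h+\gamma^{2}b_{X}^{2}h$, and the transfer of essential self-adjointness and of the spectral bound from $\L_{0}+\gamma^{2}b_{X}^{2}$ --- is exactly the paper's argument and is correct. Your observation that the zeroth-order condition $(\L_{0}+\gamma X)w=0$ is automatic once $X_{j}w=\gamma a_{j}w$ holds is a tidy point the paper does not make explicit.

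However, the ``only if'' direction of the first bullet has two genuine gaps. First, you start from ``$d\mu=w\,dx$ with $w>0$ smooth'', but the hypothesis is only that $\mu$ is a positive measure; the regularity of $\mu$ is part of what must be proved. The paper obtains it from hypoellipticity: the distributional identity $\mathfrak{I}(\phi,\psi,\mu)=0$ yields $(\L_{0}+\gamma X)\mu=0$ in the sense of distributions, and since $X_{1},\dots,X_{N}$ satisfy H\"ormander's condition the operator $\L_{0}+\gamma X$ is hypoelliptic, so $\mu$ has a smooth density. Without this step you cannot legitimately split the symmetry condition into pointwise first- and zeroth-order equations. Second, you write $X=\sum_{j}a_{j}X_{j}$ from the outset, but $X$ is an arbitrary element of $\mathfrak{g}\setminus\{0\}$ and $X_{1},\dots,X_{N}$ is only an algebraic basis: it generates $\mathfrak{g}$ as a Lie algebra but need not span it linearly. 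A necessary part of the ``only if'' direction --- carried out in the paper, following \cite{HMM05} --- is to show that symmetry forces the drift to be horizontal; indeed, if $X|_{x}\notin\operatorname{span}\{X_{1}|_{x},\dots,X_{N}|_{x}\}$ at some point, one can choose $g$ with $\nabla_{H}g(x)=0$ but $Xg(x)\neq0$, and the first-order condition $\gamma(Xg)w=\nabla_{H}g\cdot\nabla_{H}w$ fails for $\gamma\neq0$. Both gaps are repairable by the paper's arguments, but they are precisely the two nontrivial points of this implication, so they must be filled in. (A minor remark: the relation you derive, $\nabla_{H}\chi|_{e}=\gamma X|_{e}$, is what actually follows from $X_{j}\chi=\gamma a_{j}\chi$; the form $\gamma\nabla_{H}\chi|_{e}=X|_{e}$ in the statement appears to be a misprint, so no fault there.)
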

\begin{proof}[Proof of Proposition \ref{prop_drifted}] Let $\mu$ be a positive measure on $\G$. Then for all test functions $\phi,\psi\in C_{0}^{\infty}(\G)$, we calculate
$$\int_{\G}(\L_{X}\phi)\psi d\mu=-\sum_{j=1}^{N}\left(\int_{\G}(X_{j}^{2}\phi)\psi d\mu\right)-\gamma \int_{\G}\psi X\phi d\mu$$
$$=\sum_{j=1}^{N}\left(\int_{\G}X_{j}\phi X_{j}\psi d\mu+\int_{\G}\psi X_{j}\phi X_{j}\mu \right)
+\gamma\int_{\G}\phi X\psi d\mu+\gamma \int_{\G}\phi \psi X\mu $$
$$=\sum_{j=1}^{N}\left(-\int_{\G}\phi X_{j}^{2}\psi d\mu-2\int_{\G}\phi X_{j}\psi X_{j}\mu -
\int_{\G}\phi \psi X_{j}^{2}\mu \right)$$
$$+\gamma\int_{\G}\phi X\psi d\mu+\gamma \int_{\G}\phi \psi X\mu $$
$$=\int_{\G}\phi(\L_{X}\psi)d\mu+2\gamma\int_{\G}\phi X\psi d\mu
-2\int_{\G}\phi \nabla_{H}\psi \nabla_{H}\mu +\int_{\G}\phi\psi
(\L_{0}+\gamma X)\mu $$
$$=\int_{\G}\phi(\L_{X}\psi)d\mu+\langle\phi,2\gamma(X \psi)\mu-2\nabla_{H}\psi\nabla_{H}\mu+\psi(\L_{0}+\gamma X)\mu\rangle$$
\begin{equation}\label{prop_eq0}
:=\int_{\G}\phi(\L_{X}\psi)d\mu+\mathfrak{I}(\phi,\psi,\mu),
\end{equation}
where $\L_{0}$ is defined in \eqref{L0}, $\nabla_{H}=(X_{1},\ldots,X_{N})$ and $\langle\cdots,\cdots\rangle$ is the pairing between distributions and test functions on $\G$. Then, we see that $\L_{X}$ is symmetric on $L^{2}(\G,\mu)$ if and only if $\mathfrak{I}(\phi,\psi,\mu)=0$ for all functions $\phi$ and $\psi$, that is,
\begin{equation}\label{prop_eq1}
(\L_{0}+\gamma X)\mu=0, \;\;\gamma(X \psi)\mu-\nabla_{H}\psi\nabla_{H}\mu=0,\;\;\forall\psi\in C_{0}^{\infty}(\G).
\end{equation}
We know that the vector fields $X_{1},\ldots,X_{N}$ satisfy H\"{o}rmander's condition, so that $\L_{0}+\gamma X$ is hypoelliptic, which implies with the condition $(\L_{0}+\gamma X)\mu=0$ that $\mu$ has a smooth density $\omega$ with respect to the Haar measure. Then, as in \cite[Proof of the Proposition 3.1]{HMM05}, we show that
\begin{equation}\label{prop_eq11}
X=\sum_{j=1}^{N}a_{j}X_{j},
\end{equation}
for some coefficients $a_{1},\ldots,a_{N}$. Using the fact that $X_{1},\ldots,X_{N}$ are linearly independent and the second equation of \eqref{prop_eq1}, we obtain that
\begin{equation}\label{prop_eq2}
X_{k}\omega=\gamma a_{k}\omega,
\end{equation}
where $k=1,\ldots,N$. Similarly to \cite[Proof of the Proposition 3.1]{HMM05}, we find the solution of \eqref{prop_eq2}
\begin{equation}\label{prop_eq3}
\omega(x)=\omega(e)\exp\left(\gamma\int_{0}^{1}\sum_{j=1}^{N}a_{k}\vartheta_{k}(t)dt\right),
\end{equation}
which is a positive and uniquely determined by its value at the identity, where $\vartheta_{k}(t)$ is the piecewise $C^{1}$ path. By normalising $\omega$, we get that $\omega(e)=1$, and that it is a character of $\G$. Then, we see that the function $x\mapsto \omega(xy)/\omega(y)$ is a solution of \eqref{prop_eq2} for any $y$ in $\G$. Since the value of this function at the identity is $1$, we have $\omega(xy)=\omega(x)\omega(y)$ for any $x,y\in\G$, and $\omega$ is a character of $\G$. From \eqref{prop_eq11} and \eqref{prop_eq2}, we get $\gamma\nabla_{H}\chi |_{e}=X|_{e}$ with $\chi=\omega$. Thus, we have proved the first part of Proposition \ref{prop_drifted}.

As in the case $\gamma=1$ (see \cite[Proposition 3.1]{HMM05}), by considering the isometry $\mathcal{U}_{2}f=\chi^{-1/2}f$ of $L^{2}(\G, \mu)$ onto $L^{2}(\G, \mu_{X})$, we have
\begin{equation}\label{identity_drift}
\chi^{\frac{1}{2}}\L_{X}(\chi^{-\frac{1}{2}}f)=(\L_{0}+\gamma^{2}b_{X}^{2})f,
\end{equation}
which is essentially self-adjoint operator on $L^{2}(\G, \mu)$, where $b_{X}$ is defined in \eqref{bX}. Since the spectrum of this operator is contained in $[\gamma^{2}b_{X}^{2},\infty)$, we obtain that $\L_{X}$ is essentially self-adjoint on $L^{2}(\G, \mu_{X})$ and its spectrum is contained in $[\gamma^{2}b_{X}^{2},\infty)$.

This completes the proof of Proposition \ref{prop_drifted}.
\end{proof}
We assume that there exists a positive character $\chi$ of $\G$ such that
$$\gamma\nabla_{H}\chi |_{e}=X|_{e}.$$
Then, by Proposition \ref{prop_drifted} it is equivalent to the fact that the operator $\L_{X}$ is formally self-adjoint with respect to a positive measure $\widetilde{\mu}$ on $\G$, where $\mu$ is the right Haar measure of $\G$ and $\widetilde{\mu}$ is a multiple of the measure $\mu_{X}=\chi \mu$. In this case we have that $\L_{X}$ is self-adjoint on $L^{2}(\G,\mu_{X})$. We also have for all $f\in C_{0}^{\infty}(\mathbb{G})$
\begin{equation}\label{muX}
L^{2}(\G,\mu)\ni f\mapsto\chi^{-1/2}f\in L^{2}(\G, \mu_{X})
\end{equation}
is an isometric isomorphism. For a detailed discussion about properties of the sub-Laplacians with drift we refer to \cite{HMM04}, \cite{HMM05} and \cite{MOV17}.

\section{Rellich inequalities for sub-Laplacians and for sub-Laplacians with drift on stratified groups}
\label{SEC:straf}
In this section we first show the horizontal weighted Rellich inequality on stratified groups. Then, using this we prove the weighted Rellich inequalities for sub-Laplacians with drift. Let
$$\L=-\L_{0}=\sum_{k=1}^{N}X_{k}^{2}$$ be the usual sub-Laplacian on $\G$.
Let us start from the following weighted Rellich inequality which is essentially a consequence of other results in \cite{RS17a}.

\begin{thm}\label{Rellich_thm}
Let $\mathbb{G}$ be a stratified group with $N\geq3$ being the dimension of the first stratum. Let $\delta\in\mathbb{R}$ with $-N/2\leq\delta\leq-1$. Then for all functions $f\in C_{0}^{\infty}(\mathbb{G}\backslash\{x'=0\})$ we have
\begin{equation}\label{Rellich_strat}
\left\|\frac{\L f}{|x'|^{\delta}}\right\|_{L^{2}(\G)}\geq\left|\frac{(N-2\delta-4)(N+2\delta)}{4}\right|
\left\|\frac{f}{|x'|^{\delta+2}}\right\|_{L^{2}(\G)}.
\end{equation}
If $N+2\delta\neq0$, then the constant in \eqref{Rellich_strat} is sharp.
\end{thm}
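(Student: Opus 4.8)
The plan is to prove the weighted Rellich inequality \eqref{Rellich_strat} by exploiting the explicit formulas \eqref{formula1} and \eqref{formula2} for the horizontal gradient and divergence of radial-in-$x'$ weights, combined with integration by parts and an application of the horizontal Hardy inequality. The key structural fact I would rely on is that $\L|x'|^{b} = \mathrm{div}_{H}(\nabla_{H}|x'|^{b})$, and using \eqref{formula1}–\eqref{formula2} one computes $\L|x'|^{b} = b(N+b-2)|x'|^{b-2}$ for $|x'|\neq 0$. This lets us relate $\L$ applied to the weighted function to lower-order weighted quantities in a controlled way.

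First I would set $g = |x'|^{-\delta-2} f$ (or a related substitution) to reduce the weighted statement to a cleaner one, or alternatively work directly with the quadratic form. The natural approach is to expand $\left\||x'|^{-\delta}\L f\right\|_{L^2(\G)}^2$ and peel off a perfect-square term. Concretely, I would write $\int_{\G}|x'|^{-2\delta}|\L f|^2\,dx$ and integrate by parts twice, moving derivatives onto the weight $|x'|^{-2\delta}$; the cross terms produce an expression involving $\int_{\G}|x'|^{-2\delta-2}\,\nabla_H f\cdot(\text{radial direction})\,\L f$, which after using \eqref{formula2} to evaluate $\mathrm{div}_H(x'/|x'|^{b})$ yields the factor $(N-2\delta-4)(N+2\delta)/4$. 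The constraint $-N/2\le\delta\le -1$ is precisely what guarantees the relevant boundary and sign conditions (e.g. $N+2\delta\ge 0$ and the intermediate Hardy-type weight exponents stay in the admissible range) so that the integration by parts is justified on $C_0^\infty(\G\setminus\{x'=0\})$ and no negative cross terms spoil the estimate.

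Since the theorem is advertised as essentially a consequence of results in \cite{RS17a}, the cleanest route is to invoke the horizontal weighted Hardy inequality already established there, applied to $\L f$ or to an intermediate first-order quantity, and then iterate. I would apply a horizontal Hardy inequality of the form $\left\||x'|^{-s}\nabla_H h\right\|_{L^2} \ge c(N,s)\left\||x'|^{-s-1}h\right\|_{L^2}$ twice—once to pass from $\L f$ to a first-order weighted gradient term, and once more to reach $|x'|^{-\delta-2}f$—tracking the two constants, whose product gives $\bigl|(N-2\delta-4)(N+2\delta)/4\bigr|$. The absolute value appears because one of the two linear factors can change sign within the allowed $\delta$-range while the square in the $L^2$ norm forces positivity.

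The main obstacle I anticipate is twofold: first, ensuring the integration-by-parts manipulations are valid and that all intermediate weighted integrals are finite for $f\in C_0^\infty(\G\setminus\{x'=0\})$, which hinges on the exponent restriction $-N/2\le\delta\le-1$ keeping each weight power in the Hardy-admissible regime; second, and more delicate, establishing sharpness of the constant when $N+2\delta\neq 0$. For sharpness I would construct a minimizing sequence by taking $f_\varepsilon$ approximating the formal extremizer $|x'|^{2-\delta-N/2}$ (the power making both Hardy steps saturate simultaneously), suitably truncated and mollified near $x'=0$ and at infinity, and verify that the ratio of the two sides of \eqref{Rellich_strat} converges to the claimed constant; the condition $N+2\delta\neq0$ is exactly what makes this candidate profile nontrivial and the equality in the underlying Hardy inequalities attainable in the limit.
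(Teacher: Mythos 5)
Your proposal is essentially the paper's argument: the proof composes two inequalities from \cite{RS17a} --- the Hardy--Rellich inequality $\bigl\||x'|^{-\delta}\L f\bigr\|_{L^2}\geq\bigl|\tfrac{N+2\delta}{2}\bigr|\,\bigl\||x'|^{-\delta-1}\nabla_H f\bigr\|_{L^2}$ (valid exactly for $-N/2\leq\delta\leq-1$, which is where the hypothesis comes from) and the horizontal Hardy inequality $\bigl\||x'|^{-\delta-1}\nabla_H f\bigr\|_{L^2}\geq\bigl|\tfrac{N-2\delta-4}{2}\bigr|\,\bigl\||x'|^{-\delta-2}f\bigr\|_{L^2}$ --- and obtains sharpness by noting that the same power $|x'|^{C_1}$ saturates the H\"older equality condition in both underlying proofs simultaneously, just as you propose with your common extremizer. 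The only imprecision is that the first step is not ``the Hardy inequality applied to $\L f$'' but a genuinely separate second-to-first-order inequality (\cite[Theorem 5.1]{RS17a}), whose own proof is the integration-by-parts computation you sketch.
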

\begin{proof}[Proof of Theorem \ref{Rellich_thm}] In the case $p=2$ and $\gamma=2\beta$, \cite[Theorem 3.1]{RS17a} implies
\begin{equation}\label{Hardy_strat0}
\left\|\frac{\nabla_{H} f}{|x'|^{\beta-1}}\right\|_{L^{2}(\G)}
\geq\left|\frac{N-2\beta}{2}\right|\left\|\frac{f}{|x'|^{\beta}}\right\|_{L^{2}(\G)}.
\end{equation}
Putting $\delta+2$ instead of $\beta$, one gets
\begin{equation}\label{Hardy_strat}
\left\|\frac{\nabla_{H} f}{|x'|^{\delta+1}}\right\|_{L^{2}(\G)}\geq\left|\frac{N-2\delta-4}{2}\right|\left\|\frac{f}{|x'|^{\delta+2}}\right\|_{L^{2}(\G)}.
\end{equation}
It is known that the constant $\left|\frac{N-2\delta-4}{2}\right|$ is sharp when $N-2\delta-4\neq0$.
On the other hand, when $p=2$, $\gamma=2\beta$, \cite[Theorem 5.1]{RS17a} implies
\begin{equation}\label{H-Rellich_strat}
\left\|\frac{\L f}{|x'|^{\beta-1}}\right\|_{L^{2}(\G)}
\geq\left|\frac{N+2\beta-2}{2}\right|\left\|\frac{\nabla_{H}f}{|x'|^{\beta}}\right\|_{L^{2}(\G)}
\end{equation}
for $2-N\leq 2\beta\leq0$ and $N\geq3$.
Putting $\delta+1$ instead of $\beta$, we have
\begin{equation}\label{H-Rellich_strat}
\left\|\frac{\L f}{|x'|^{\delta}}\right\|_{L^{2}(\G)}
\geq\left|\frac{N+2\delta}{2}\right|\left\|\frac{\nabla_{H}f}{|x'|^{\delta+1}}\right\|_{L^{2}(\G)}
\end{equation}
for $2-N\leq 2\delta+2\leq0$ and $N\geq3$.
Combining \eqref{Hardy_strat} and \eqref{H-Rellich_strat}, we obtain \eqref{Rellich_strat}.

Now it remains to show sharpness of the constant in \eqref{Rellich_strat}. In \cite[Proof of Theorem 3.1]{RS17a}, the authors showed sharpness of the constant in \eqref{Hardy_strat} by checking the equality condition in H\"{o}lder's inequality. Namely, the function $|x'|^{C_{1}}$ satisfies this equality condition for any real number $C_{1}\neq0$. Similarly, in \cite[Formula 5.7]{RS17a}, checking again the equality condition in H\"{o}lder's inequality, we see that the same function $|x'|^{C_{1}}$ satisfies the equality condition. Therefore, the constant in \eqref{H-Rellich_strat} is sharp when $N+2\delta\neq0$, so, the constant in \eqref{Rellich_strat} is sharp for $N+2\delta\neq0$. 
\end{proof}

Now we prove the weighted Rellich inequalities for sub-Laplacians with drift.
\begin{thm}\label{Rellich_thm_dr}
Let $\mathbb{G}$ be a stratified group with $N\geq3$ being the dimension of the first stratum. Let $\delta\in\mathbb{R}$ with $-N/2\leq\delta\leq-1$. Then for all functions $f\in C_{0}^{\infty}(\mathbb{G}\backslash\{x'=0\})$ we have
$$\left\|\frac{\L_{X} f}{|x'|^{\delta}}\right\|^{2}_{L^{2}(\G,\mu_{X})}\geq
\left(\frac{(N-2\delta-4)(N+2\delta)}{4}\right)^{2}\left\|\frac{f}{|x'|^{\delta+2}}\right\|^{2}_{L^{2}(\G,\mu_{X})}
$$
\begin{equation}\label{Rellich_strat_dr}
+\gamma^{2}b_{X}^{2}\frac{(N-2\delta-2)(N+2\delta-2)}{2}\left\|\frac{f}{|x'|^{\delta+1}}\right\|^{2}_{L^{2}(\G,\mu_{X})}
+\gamma^{4}b_{X}^{4}\left\|\frac{f}{|x'|^{\delta}}\right\|^{2}_{L^{2}(\G,\mu_{X})},
\end{equation}
where $\L_{X}$ and $b_{X}$ are defined in \eqref{sub-lap_drift1} and \eqref{bX}, respectively. If $(N+2\delta)(N+2\delta-2)\neq0$, then the constants in \eqref{Rellich_strat_dr} are sharp.
Moreover, when $\delta=0$ and $N>4$, we have for any function $f\in C_{0}^{\infty}(\mathbb{G}\backslash\{x'=0\})$
\begin{multline}\label{Rellich_strat_dr1}
\left\|\L_{X} f\right\|^{2}_{L^{2}(\G,\mu_{X})}\geq
\left(\frac{N(N-4)}{4}\right)^{2}\left\|\frac{f}{|x'|^{2}}\right\|^{2}_{L^{2}(\G,\mu_{X})}
\\
+\gamma^{2}b_{X}^{2}\frac{(N-2)^{2}}{2}\left\|\frac{f}{|x'|}\right\|^{2}_{L^{2}(\G,\mu_{X})}
+\gamma^{4}b_{X}^{4}\|f\|^{2}_{L^{2}(\G,\mu_{X})},
\end{multline}
with sharp constants. The constants in \eqref{Rellich_strat_dr} and \eqref{Rellich_strat_dr1} are sharp in the sense that there is a sequence of functions such that the equality in \eqref{Rellich_strat_dr} and \eqref{Rellich_strat_dr1} is attained in the limit of this sequence of functions, respectively.
\end{thm}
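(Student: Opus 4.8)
The plan is to use the intertwining identity \eqref{identity_drift} together with the isometry \eqref{muX} to reduce the drifted inequality on $L^{2}(\G,\mu_{X})$ to a driftless one on $L^{2}(\G)$ (with respect to the Haar measure), where $\L_{X}$ is replaced by $\L_{0}+\gamma^{2}b_{X}^{2}=-\L+\gamma^{2}b_{X}^{2}$. Writing $f=\chi^{-1/2}g$, identity \eqref{identity_drift} gives $\chi^{1/2}\L_{X}f=(\L_{0}+\gamma^{2}b_{X}^{2})g$; since $|x'|$ depends on position only, the density $\chi$ cancels in every weighted $L^{2}$-norm, so that
\[
\left\|\frac{\L_{X}f}{|x'|^{s}}\right\|^{2}_{L^{2}(\G,\mu_{X})}=\left\|\frac{(-\L+\gamma^{2}b_{X}^{2})g}{|x'|^{s}}\right\|^{2}_{L^{2}(\G)},\qquad \left\|\frac{f}{|x'|^{s}}\right\|^{2}_{L^{2}(\G,\mu_{X})}=\left\|\frac{g}{|x'|^{s}}\right\|^{2}_{L^{2}(\G)}
\]
for every exponent $s$. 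The whole statement thus becomes a weighted estimate for the single operator $-\L+c$ with $c:=\gamma^{2}b_{X}^{2}\geq0$.

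Next I would expand the square (taking $g$ real):
\[
\left\|\frac{(-\L+c)g}{|x'|^{\delta}}\right\|^{2}_{L^{2}(\G)}=\left\|\frac{\L g}{|x'|^{\delta}}\right\|^{2}_{L^{2}(\G)}-2c\int_{\G}\frac{(\L g)g}{|x'|^{2\delta}}\,d\mu+c^{2}\left\|\frac{g}{|x'|^{\delta}}\right\|^{2}_{L^{2}(\G)}.
\]
The first term is bounded below by Theorem \ref{Rellich_thm}, producing the constant $\left(\frac{(N-2\delta-4)(N+2\delta)}{4}\right)^{2}$, while the last term is exactly the $\gamma^{4}b_{X}^{4}$ contribution after returning through the isometry. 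It remains to handle the cross term.

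For the cross term I would integrate by parts twice, using the divergence theorem for the $X_{k}$, the relation $X_{k}|x'|=x'_{k}/|x'|$, and formula \eqref{formula2} in the form ${\rm div}_{H}(x'/|x'|^{2\delta+2})=(N-2\delta-2)/|x'|^{2\delta+2}$. This yields the exact identity
\[
\int_{\G}\frac{(\L g)g}{|x'|^{2\delta}}\,d\mu=-\int_{\G}\frac{|\nabla_{H}g|^{2}}{|x'|^{2\delta}}\,d\mu-\delta(N-2\delta-2)\int_{\G}\frac{g^{2}}{|x'|^{2\delta+2}}\,d\mu.
\]
Substituting and using $c\geq0$, I apply the horizontal Hardy inequality \eqref{Hardy_strat0} with $\beta=\delta+1$, namely $\int_{\G}|\nabla_{H}g|^{2}/|x'|^{2\delta}\geq\left(\frac{N-2\delta-2}{2}\right)^{2}\int_{\G}g^{2}/|x'|^{2\delta+2}$, to bound the gradient term from below. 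The two contributions combine, after a short simplification, into the single coefficient $\frac{(N-2\delta-2)(N+2\delta-2)}{2}$ in front of $\int_{\G}g^{2}/|x'|^{2\delta+2}$, which is precisely the middle term of \eqref{Rellich_strat_dr}. The admissible range $-N/2\leq\delta\leq-1$ is exactly what Theorem \ref{Rellich_thm} (through \eqref{H-Rellich_strat}) requires.

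Finally, the special case \eqref{Rellich_strat_dr1} needs a small adjustment, since $\delta=0$ lies outside the range of Theorem \ref{Rellich_thm}: I would instead invoke the standalone horizontal Rellich inequality $\|\L g\|_{L^{2}(\G)}\geq\frac{N(N-4)}{4}\|g/|x'|^{2}\|_{L^{2}(\G)}$ (valid for $N>4$) for the first term; at $\delta=0$ the cross-term identity degenerates to $\int_{\G}(\L g)g=-\int_{\G}|\nabla_{H}g|^{2}$, and the classical Hardy bound $\|\nabla_{H}g\|\geq\frac{N-2}{2}\|g/|x'|\|$ gives the coefficient $\frac{(N-2)^{2}}{2}$, matching the general formula at $\delta=0$. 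For sharpness I would use that the extremizing behaviour of both Theorem \ref{Rellich_thm} and \eqref{Hardy_strat0} is realized in the limit by the \emph{same} family $|x'|^{C_{1}}$, so a single sequence built from these functions makes both lower bounds simultaneously tight; the conditions $(N+2\delta)(N+2\delta-2)\neq0$ (respectively $N>4$) ensure none of the constants degenerate. The main obstacle I anticipate is the cross term: carrying out the two integrations by parts so that \eqref{formula2} delivers exactly $-\delta(N-2\delta-2)$, and then verifying that its combination with the Hardy lower bound collapses---introducing no slack beyond that already in Theorem \ref{Rellich_thm} and \eqref{Hardy_strat0}---to precisely $\frac{(N-2\delta-2)(N+2\delta-2)}{2}$; securing \emph{simultaneous} sharpness of these two independent bounds along one sequence is the subtlest point.
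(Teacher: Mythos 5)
Your proposal is correct and follows essentially the same route as the paper: the reduction via $f=\chi^{-1/2}g$ and \eqref{identity_drift} to the operator $\L_{0}+\gamma^{2}b_{X}^{2}$ on $L^{2}(\G,\mu)$, the expansion of the square, the integration-by-parts identity for the cross term (your $-\delta(N-2\delta-2)$ coefficient matches the paper's \eqref{strat2}), the use of Theorem \ref{Rellich_thm} and the Hardy inequality \eqref{Hardy_strat0}, the substitution of \eqref{Rellich_SS} from \cite{SS17} for the case $\delta=0$, and the sharpness argument via the common extremal family $|x'|^{C_{1}}$. No substantive differences.
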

\begin{rem} When $(N-2\delta-2)(N+2\delta-2)\geq0$, by dropping positive terms in \eqref{Rellich_strat_dr} we get the following `standard' Rellich type inequality for all functions $f\in C_{0}^{\infty}(\mathbb{G}\backslash\{x'=0\})$
\begin{equation}\label{Rellich_strat_dr_rem}\left\|\frac{\L_{X} f}{|x'|^{\delta}}\right\|^{2}_{L^{2}(\G,\mu_{X})}\geq
\left(\frac{(N-2\delta-4)(N+2\delta)}{4}\right)^{2}\left\|\frac{f}{|x'|^{\delta+2}}\right\|^{2}_{L^{2}(\G,\mu_{X})},
\end{equation}
where $\delta\in\mathbb{R}$ with $-N/2\leq\delta\leq-1$ and $N\geq3$.

Similarly, from \eqref{Rellich_strat_dr1} we obtain for any function  $f\in C_{0}^{\infty}(\mathbb{G}\backslash\{x'=0\})$
\begin{equation}\label{Rellich_strat_dr1_rem}\left\|\L_{X} f\right\|^{2}_{L^{2}(\G,\mu_{X})}\geq
\left(\frac{N(N-4)}{4}\right)^{2}\left\|\frac{f}{|x'|^{2}}\right\|^{2}_{L^{2}(\G,\mu_{X})}.
\end{equation}
\end{rem}
\begin{rem} In the Euclidean case $\G =(\Rn, +)$, we have $N=n$, $\nabla_{H}=\nabla=(\partial_{x_{1}},...,\partial_{x_{n}})$ is the usual (full) gradient, setting $X=\sum_{i=1}^{N}a_{i}\partial_{x_{i}}$ and $\delta=-\alpha$, \eqref{Rellich_strat_dr} implies \eqref{intro:Rellich_pol} for $\alpha\geq1$ and $n\geq2\alpha$ with sharp constants, while \eqref{Rellich_strat_dr1} gives \eqref{intro:Rellich_strat_dr1} again with sharp constants.
\end{rem}
\begin{rem} In the case $\gamma=0$, we obtain Theorem \ref{Rellich_thm} and \cite[Formula 1.5]{SS17} from \eqref{Rellich_strat_dr} and \eqref{Rellich_strat_dr1}, respectively.
\end{rem}
\begin{proof}[Proof of Theorem \ref{Rellich_thm_dr}]
Let $g=g(x)\in C_{0}^{\infty}(\G\backslash\{x'=0\})$ be such that $f=\chi^{-1/2}g$. Taking into account \eqref{muX}, we have
$$
\left\|\frac{\L_{X} f}{|x'|^{\delta}}\right\|_{L^{2}(\G,\mu_{X})}=
\left\|\frac{\chi^{1/2}\L_{X}f}{|x'|^{\delta}}\right\|_{L^{2}(\G,\mu)}
=\left\|\frac{\chi^{1/2}\L_{X}(\chi^{-1/2}g)}{|x'|^{\delta}}\right\|_{L^{2}(\G,\mu)}.$$
By this, \eqref{identity_drift} and integration by parts, we calculate
$$
\left\|\frac{\L_{X} f}{|x'|^{\delta}}\right\|^{2}_{L^{2}(\G,\mu_{X})}=\left\|\frac{(\L_{0}+\gamma^{2}b_{X}^{2})g}{|x'|^{\delta}}\right\|^{2}_{L^{2}(\G,\mu)}$$
$$=\left\|\frac{\L_{0} g}{|x'|^{\delta}}\right\|^{2}_{L^{2}(\G,\mu)}+2\gamma^{2}b_{X}^{2}{\rm Re}\int_{\G}\frac{\L_{0}g(x)\overline{g(x)}}{|x'|^{2\delta}}dx
+\gamma^{4}b_{X}^{4}\left\|\frac{g}{|x'|^{\delta}}\right\|^{2}_{L^{2}(\G,\mu)}$$
$$=\left\|\frac{\L_{0} g}{|x'|^{\delta}}\right\|^{2}_{L^{2}(\G,\mu)}-2\gamma^{2}b_{X}^{2}{\rm Re}\sum_{j=1}^{N}\int_{\G}\frac{X_{j}^{2} g(x)\overline{g(x)}}{|x'|^{2\delta}}dx+\gamma^{4}b_{X}^{4}\left\|\frac{g}{|x'|^{\delta}}\right\|^{2}_{L^{2}(\G,\mu)}$$
\begin{multline}\label{strat1}
=\left\|\frac{\L_{0} g}{|x'|^{\delta}}\right\|^{2}_{L^{2}(\G,\mu)}
+2\gamma^{2}b_{X}^{2}\int_{\G}\frac{|\nabla_{H}g(x)|^{2}}{|x'|^{2\delta}}
\\-4\delta \gamma^{2}b_{X}^{2}{\rm Re}\sum_{j=1}^{N}\int_{\G}\frac{x'_{j}X_{j}
g(x)\overline{g(x)}}{|x'|^{2\delta+2}}dx+\gamma^{4}b_{X}^{4}\left\|\frac{g}{|x'|^{\delta}}\right\|^{2}_{L^{2}(\G,\mu)}.
\end{multline}
Since
$${\rm Re}\sum_{j=1}^{N}\int_{\G}\frac{x'_{j}X_{j}
g(x)\overline{g(x)}}{|x'|^{2\delta+2}}dx=(2\delta+2-N)\int_{\G}\frac{|g(x)|^{2}}{|x'|^{2\delta+2}}dx-
{\rm Re}\sum_{j=1}^{N}\int_{\G}\frac{x'_{j}
g(x)\overline{X_{j}g(x)}}{|x'|^{2\delta+2}}dx,$$
we get
$${\rm Re}\sum_{j=1}^{N}\int_{\G}\frac{x'_{j}X_{j}
g(x)\overline{g(x)}}{|x'|^{2\delta+2}}dx=\frac{2\delta+2-N}{2}\int_{\G}\frac{|g(x)|^{2}}{|x'|^{2\delta+2}}dx.$$
Plugging this into \eqref{strat1}, we obtain
\begin{equation}\label{strat2}
\begin{split}
\left\|\frac{\L_{X} f}{|x'|^{\delta}}\right\|^{2}_{L^{2}(\G,\mu_{X})}=
\left\|\frac{\L_{0} g}{|x'|^{\delta}}\right\|^{2}_{L^{2}(\G,\mu)}
+2\gamma^{2}b_{X}^{2}\left\|\frac{\nabla_{H}g}{|x'|^{\delta}}\right\|^{2}_{L^{2}(\G,\mu)}\\
+2\delta(N-2\delta-2)\gamma^{2}b_{X}^{2}\left\|\frac{g}{|x'|^{\delta+1}}\right\|^{2}_{L^{2}(\G,\mu)}
+\gamma^{4}b_{X}^{4}\left\|\frac{g}{|x'|^{\delta}}\right\|^{2}_{L^{2}(\G,\mu)}.
\end{split}
\end{equation}
Using the Rellich \eqref{Rellich_strat} and Hardy \eqref{Hardy_strat0} inequalities, we get from \eqref{strat2}
$$\left\|\frac{\L_{X} f}{|x'|^{\delta}}\right\|^{2}_{L^{2}(\G,\mu_{X})}\geq
\left(\frac{(N-2\delta-4)(N+2\delta)}{4}\right)^{2}\left\|\frac{g}{|x'|^{\delta+2}}\right\|^{2}_{L^{2}(\G,\mu)}
+\gamma^{4}b_{X}^{4}\left\|\frac{g}{|x'|^{\delta}}\right\|^{2}_{L^{2}(\G,\mu)}$$
\begin{equation*}
+2\gamma^{2}b_{X}^{2}\left(\frac{N-2\delta-2}{2}\right)^{2}\left\|\frac{g}{|x'|^{\delta+1}}\right\|^{2}_{L^{2}(\G,\mu)}
+2\delta(N-2\delta-2)\gamma^{2}b_{X}^{2}\left\|\frac{g}{|x'|^{\delta+1}}\right\|^{2}_{L^{2}(\G,\mu)}.
\end{equation*}
It follows that
$$\left\|\frac{\L_{X} f}{|x'|^{\delta}}\right\|^{2}_{L^{2}(\G,\mu_{X})}\geq
\left(\frac{(N-2\delta-4)(N+2\delta)}{4}\right)^{2}\left\|\frac{f}{|x'|^{\delta+2}}\right\|^{2}_{L^{2}(\G,\mu_{X})}
+\gamma^{4}b_{X}^{4}\left\|\frac{f}{|x'|^{\delta}}\right\|^{2}_{L^{2}(\G,\mu_{X})}$$
\begin{equation}\label{strat4}
+\gamma^{2}b_{X}^{2}\frac{(N-2\delta-2)(N+2\delta-2)}{2}\left\|\frac{f}{|x'|^{\delta+1}}\right\|^{2}_{L^{2}(\G,\mu_{X})}.
\end{equation}
As we have discussed in the proof of the Theorem \ref{Rellich_thm}, since the same function satisfies the equality conditions in H\"{o}lder's inequalities, the constants in \eqref{Rellich_strat_dr} are sharp.

To obtain \eqref{Rellich_strat_dr1}, that is the unweighted case $\delta=0$, we use the inequality \eqref{Hardy_strat0} and the following inequality from \cite[Formula 1.5]{SS17}
\begin{equation}\label{Rellich_SS}
\|\L f\|_{L^{2}(\G)}\geq \frac{N(N-4)}{4}\left\|\frac{f}{|x'|^{2}}\right\|_{L^{2}(\G)}, \;\;N\geq5,
\end{equation}
for $f\in C_{0}^{\infty}(\mathbb{G}\backslash\{x'=0\})$. Since it is also known that the constant $\frac{4}{N(N-4)}$ is sharp in \eqref{Rellich_SS}, using the same argument as for the constants in \eqref{Rellich_strat_dr}, we obtain the sharpness of the constants in \eqref{Rellich_strat_dr1}.
\end{proof}

\section{Rellich inequalities for sub-Laplacians and for sub-Laplacians with drift on polarizable Carnot groups}
\label{SEC:pol}

In this section we prove the weighted Rellich inequality for sub-Laplacians with drift on a polarizable Carnot group expressing the weights in terms of the fundamental solution of the sub-Laplacian.

\begin{thm}\label{Rellich_thm_pol}
Let $\mathbb{G}$ be a polarizable Carnot group of homogeneous dimension $Q\geq3$ and let $\theta\in \mathbb{R}$ with $Q+2\theta-4>0$. Then for all functions $f\in C_{0}^{\infty}(\mathbb{G}\backslash\{0\})$ we have
$$\left\|\frac{d^{\theta}}{|\nabla_{H}d|}\L_{X} f\right\|^{2}_{L^{2}(\G,\mu_{X})}
\geq\frac{(Q+2\theta-4)^{2}(Q-2\theta)^{2}}{16}\left\|d^{\theta-2}|\nabla_{H}d|f\right\|^{2}_{L^{2}(\G,\mu_{X})}$$
$$
+\gamma^{4}b_{X}^{4}\left\|\frac{d^{\theta}}{|\nabla_{H}d|}f\right\|^{2}_{L^{2}(\G,\mu_{X})}
+\gamma^{2}b_{X}^{2}\left(\frac{(Q+2\theta-2)(Q-2\theta-2)}{2}\right)\left\|d^{\theta-1}f\right\|^{2}_{L^{2}(\G,\mu_{X})}
$$
$$+2\gamma^{2}b_{X}^{2}(Q-1)(3Q-4)\left\|d^{\theta-1}f\right\|^{2}_{L^{2}(\G,\mu_{X})}$$
\begin{equation}\label{Rellich_pol}
+2\gamma^{2}b_{X}^{2}\int_{\G}
\frac{d^{2\theta+2Q-2}}{|\nabla_{H}d|^{4}}
\left(d^{1-Q}|\nabla_{H}d|\L(d^{1-Q}|\nabla_{H}d|)-3|\nabla_{H}(d^{1-Q}|\nabla_{H}d|)|^{2}\right)|f(x)|^{2}d\mu_{X}(x),
\end{equation}
where $\L_{X}$ and $b_{X}$ are defined in \eqref{sub-lap_drift1} and \eqref{bX}, respectively.
\end{thm}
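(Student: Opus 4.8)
The plan is to follow the scheme of the proof of Theorem \ref{Rellich_thm_dr}, first stripping off the drift and then feeding in the polarizable analogues of the Hardy and Rellich inequalities. I would set $f=\chi^{-1/2}g$ with $g\in C_{0}^{\infty}(\G\backslash\{0\})$, so that the isometry \eqref{muX} and the intertwining identity \eqref{identity_drift} give $\chi^{1/2}\L_{X}(\chi^{-1/2}g)=(\L_{0}+\gamma^{2}b_{X}^{2})g$ and hence
$$\left\|\frac{d^{\theta}}{|\nabla_{H}d|}\L_{X}f\right\|^{2}_{L^{2}(\G,\mu_{X})}=\left\|\frac{d^{\theta}}{|\nabla_{H}d|}(\L_{0}+\gamma^{2}b_{X}^{2})g\right\|^{2}_{L^{2}(\G,\mu)}.$$
Expanding the square produces the Rellich term $\big\|\tfrac{d^{\theta}}{|\nabla_{H}d|}\L g\big\|^{2}$, the pure term $\gamma^{4}b_{X}^{4}\big\|\tfrac{d^{\theta}}{|\nabla_{H}d|}g\big\|^{2}$ (which already matches \eqref{Rellich_pol}), and a cross term $2\gamma^{2}b_{X}^{2}\,{\rm Re}\int_{\G}W(\L_{0}g)\overline{g}\,d\mu$ with the weight $W:=d^{2\theta}/|\nabla_{H}d|^{2}$.

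Integrating by parts twice in the cross term, using $\L_{0}=-\L=-\sum_{j}X_{j}^{2}$ and ${\rm Re}(\overline{g}X_{j}g)=\tfrac12 X_{j}|g|^{2}$, I would rewrite it as $2\gamma^{2}b_{X}^{2}\int_{\G}W|\nabla_{H}g|^{2}\,d\mu-\gamma^{2}b_{X}^{2}\int_{\G}(\L W)|g|^{2}\,d\mu$. The heart of the argument is the explicit evaluation of $\L W$. Here I would use that $u=d^{2-Q}$ is $\L$-harmonic off the origin, which forces $\L d=\frac{Q-1}{d}|\nabla_{H}d|^{2}$, together with the polarizability condition, which in the form $\L_{\G,\infty}d=0$ reads $\langle\nabla_{H}(|\nabla_{H}d|^{2}),\nabla_{H}d\rangle=0$, i.e. $\nabla_{H}d\cdot\nabla_{H}|\nabla_{H}d|=0$. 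This last identity annihilates the mixed Leibniz term in $\L(d^{2\theta}|\nabla_{H}d|^{-2})$, leaving
$$\L W=2\theta(Q+2\theta-2)\,d^{2\theta-2}-\frac{2d^{2\theta}}{|\nabla_{H}d|^{4}}\left(|\nabla_{H}d|\,\L|\nabla_{H}d|-3|\nabla_{H}|\nabla_{H}d||^{2}\right).$$

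Next I would recast the second summand through $P:=d^{1-Q}|\nabla_{H}d|$ (note $|\nabla_{H}u|=(Q-2)P$, which is why $P$ is the natural object), again using $\nabla_{H}d\cdot\nabla_{H}|\nabla_{H}d|=0$ to simplify $\nabla_{H}P$ and $\L P$. A short computation then yields
$$\frac{d^{2\theta+2Q-2}}{|\nabla_{H}d|^{4}}\left(P\L P-3|\nabla_{H}P|^{2}\right)=(Q-1)(4-3Q)\,d^{2\theta-2}+\frac{d^{2\theta}}{|\nabla_{H}d|^{4}}\left(|\nabla_{H}d|\L|\nabla_{H}d|-3|\nabla_{H}|\nabla_{H}d||^{2}\right),$$
so the $|\nabla_{H}d|\L|\nabla_{H}d|$ part of $\L W$ is exactly the integral term of \eqref{Rellich_pol}, while the $(Q-1)(4-3Q)d^{2\theta-2}$ part is cancelled precisely by the explicit term $2\gamma^{2}b_{X}^{2}(Q-1)(3Q-4)\|d^{\theta-1}f\|^{2}$, since $(Q-1)(4-3Q)=-(Q-1)(3Q-4)$.

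Finally I would invoke the two $d$-weight inequalities on polarizable Carnot groups — the Rellich inequality $\big\|\tfrac{d^{\theta}}{|\nabla_{H}d|}\L g\big\|^{2}\ge\frac{(Q+2\theta-4)^{2}(Q-2\theta)^{2}}{16}\|d^{\theta-2}|\nabla_{H}d|g\|^{2}$ for the Rellich term, and the Hardy inequality $\big\|\tfrac{d^{\theta}}{|\nabla_{H}d|}\nabla_{H}g\big\|\ge\frac{|Q+2\theta-2|}{2}\|d^{\theta-1}g\|$ for $\int_{\G}W|\nabla_{H}g|^{2}$, these being the analogues of \eqref{Rellich_strat} and \eqref{Hardy_strat0}. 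Collecting the remaining $d^{2\theta-2}$-contributions, the $-2\theta(Q+2\theta-2)$ coming from $\L W$ combines with the $\tfrac{(Q+2\theta-2)^{2}}{2}$ coming from Hardy to give $\tfrac{(Q+2\theta-2)^{2}}{2}-2\theta(Q+2\theta-2)=\tfrac{(Q+2\theta-2)(Q-2\theta-2)}{2}$, the stated coefficient; undoing the substitution via $|g|^{2}\,d\mu=|f|^{2}\,d\mu_{X}$ then produces \eqref{Rellich_pol}. I expect the main obstacle to be the explicit computation of $\L W$ and its rewriting through $P$: this is the only place where polarizability is genuinely used (it is what kills the mixed gradient terms), and the bookkeeping of the several $d^{2\theta-2}$ contributions must be carried out carefully so that the $(Q-1)(3Q-4)$ pieces cancel and the Hardy and $\L W$ terms fuse into the clean factor $(Q+2\theta-2)(Q-2\theta-2)/2$.
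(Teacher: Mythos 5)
Your proposal is correct and follows essentially the same route as the paper's proof: the substitution $f=\chi^{-1/2}g$ combined with \eqref{identity_drift}, expansion of the square, integration by parts in the cross term, explicit evaluation of the derivatives of the weight $d^{2\theta}|\nabla_{H}d|^{-2}$ using the $\L$-harmonicity of $u=d^{2-Q}$ together with the polarizability identity \eqref{identity_polar}, and finally the $d$-weighted Hardy and Rellich inequalities of Theorems \ref{K_Hardy} and \ref{K_Rellich}. The only cosmetic difference is that you integrate by parts twice and compute $\L\bigl(d^{2\theta}|\nabla_{H}d|^{-2}\bigr)$ directly in the variable $d$ before converting to the $P=d^{1-Q}|\nabla_{H}d|$ form, whereas the paper integrates by parts once, splits the resulting term into $I_{1}+I_{2}$, and carries out the equivalent computation in the variable $u$; the coefficients you obtain agree with \eqref{Rellich_pol}.
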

\begin{rem} In the abelian case $\G =(\Rn, +)$, we have $N=n$, $\nabla_{H}=\nabla=(\partial_{x_{1}},...,\partial_{x_{n}})$ is the usual (full) gradient, $d=|x|$ is the Euclidean distance, hence $|\nabla_{H}d|=1$, and setting $X=\sum_{i=1}^{N}a_{i}\partial_{x_{i}}$, the last two terms in \eqref{Rellich_pol} cancel each other, so that we obtain \eqref{intro:Rellich_pol}.
\end{rem}
\begin{rem} By setting $\gamma=0$ in \eqref{Rellich_pol} so that $\L_{X}=\L_{0}$, we obtain \cite[Theorem 4.1]{K10} on a polarizable Carnot group.
\end{rem}
\begin{proof}[Proof of Theorem \ref{Rellich_thm_pol}]
Let $g=g(x)\in C_{0}^{\infty}(\G\backslash\{0\})$ be such that $f=\chi^{-1/2}g$. By \eqref{muX} we know that $L^{2}(\G, \mu)\in g\mapsto\chi^{-1/2}g\in L^{2}(\G, \mu_{X})$. Then, we have
$$
\left\|\frac{d^{\theta}}{|\nabla_{H}d|}\L_{X} f\right\|_{L^{2}(\G,\mu_{X})}=\left\|\frac{d^{\theta}}{|\nabla_{H}d|}\chi^{1/2}\L_{X}f\right\|_{L^{2}(\G,\mu)}
=\left\|\frac{d^{\theta}}{|\nabla_{H}d|}\chi^{1/2}\L_{X}(\chi^{-1/2}g)\right\|_{L^{2}(\G,\mu)},$$
where $\mu$ is the Haar (Lebesgue) measure on $\G$.
By \eqref{identity_drift} and integration by parts, we calculate
$$
\left\|\frac{d^{\theta}}{|\nabla_{H}d|}\L_{X} f\right\|^{2}_{L^{2}(\G,\mu_{X})}=\left\|\frac{d^{\theta}}{|\nabla_{H}d|}(\L_{0}+\gamma^{2}b_{X}^{2})g\right\|^{2}_{L^{2}(\G,\mu)}$$
$$=\left\|\frac{d^{\theta}}{|\nabla_{H}d|}\L_{0} g\right\|^{2}_{L^{2}(\G,\mu)}+2\gamma^{2}b_{X}^{2}{\rm Re}\int_{\G}
\frac{d^{2\theta}}{|\nabla_{H}d|^{2}}\L_{0} g(x)\overline{g(x)}dx+\gamma^{4}b_{X}^{4}\left\|\frac{d^{\theta}}{|\nabla_{H}d|}g\right\|^{2}_{L^{2}(\G,\mu)}$$
$$=\left\|\frac{d^{\theta}}{|\nabla_{H}d|}\L_{0} g\right\|^{2}_{L^{2}(\G,\mu)}-2\gamma^{2}b_{X}^{2}{\rm Re}\sum_{j=1}^{N}\int_{\G}
\frac{d^{2\theta}}{|\nabla_{H}d|^{2}}X_{j}^{2} g(x)\overline{g(x)}dx+\gamma^{4}b_{X}^{4}\left\|\frac{d^{\theta}}{|\nabla_{H}d|}g\right\|^{2}_{L^{2}(\G,\mu)}$$
$$
=\left\|\frac{d^{\theta}}{|\nabla_{H}d|}\L_{0} g\right\|^{2}_{L^{2}(\G,\mu)}+2\gamma^{2}b_{X}^{2}\left\|\frac{d^{\theta}}{|\nabla_{H}d|}\nabla_{H}g\right\|^{2}_{L^{2}(\G,\mu)}
+\gamma^{4}b_{X}^{4}\left\|\frac{d^{\theta}}{|\nabla_{H}d|}g\right\|^{2}_{L^{2}(\G,\mu)}$$
$$
+2\gamma^{2}b_{X}^{2}{\rm Re}\sum_{j=1}^{N}\int_{\G}
X_{j}g(x)\overline{g(x)}X_{j}\left(\frac{d^{2\theta}}{|\nabla_{H}d|^{2}}\right)dx
$$
$$
=\left\|\frac{d^{\theta}}{|\nabla_{H}d|}\L_{0} g\right\|^{2}_{L^{2}(\G,\mu)}+2\gamma^{2}b_{X}^{2}\left\|\frac{d^{\theta}}{|\nabla_{H}d|}\nabla_{H}g\right\|^{2}_{L^{2}(\G,\mu)}
+\gamma^{4}b_{X}^{4}\left\|\frac{d^{\theta}}{|\nabla_{H}d|}g\right\|^{2}_{L^{2}(\G,\mu)}$$
$$
+4\gamma^{2}b_{X}^{2}\theta{\rm Re}\sum_{j=1}^{N}\int_{\G}
X_{j}g(x)\overline{g(x)}\frac{d^{2\theta-1}X_{j}d}{|\nabla_{H}d|^{2}}dx
-4\gamma^{2}b_{X}^{2}{\rm Re}\sum_{j=1}^{N}\int_{\G}
X_{j}g(x)\overline{g(x)}\frac{d^{2\theta}X_{j}|\nabla_{H}d|}{|\nabla_{H}d|^{3}}dx$$
\begin{equation}\label{polar1}
:=\left\|\frac{d^{\theta}}{|\nabla_{H}d|}\L_{0} g\right\|^{2}_{L^{2}(\G,\mu)}+2\gamma^{2}b_{X}^{2}\left\|\frac{d^{\theta}}{|\nabla_{H}d|}\nabla_{H}g\right\|^{2}_{L^{2}(\G,\mu)}
+\gamma^{4}b_{X}^{4}\left\|\frac{d^{\theta}}{|\nabla_{H}d|}g\right\|^{2}_{L^{2}(\G,\mu)}
+I_{1}+I_{2}.
\end{equation}
Now we need the following Hardy and Rellich inequalities from \cite{K10}:
\begin{thm}[{\cite[Theorem 3.1]{K10}}]
\label{K_Hardy} Let $\G$ be a polarizable Carnot group with homogeneous dimension $Q\geq3$. Let $1<p<Q$ and $\alpha\in\mathbb{R}$ be such that $\alpha>-Q$. Then we have for $f\in C_{0}^{\infty}(\G\backslash\{0\})$
\begin{equation}\label{K_Hardy_ineq}
\int_{\G}d^{\alpha+p}\frac{|\nabla_{H}d\cdot\nabla_{H}f|^{p}}{|\nabla_{H}d|^{2p}}dx\geq \left(\frac{Q+\alpha}{p}\right)^{p}\int_{\G}d^{\alpha}|f|^{p}dx,
\end{equation}
with sharp constant.
\end{thm}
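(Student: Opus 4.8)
To prove the Hardy inequality \eqref{K_Hardy_ineq} I would use the divergence (vector-field) method, in which the polarizability of $\G$ plays the decisive role. The plan is first to produce a horizontal vector field $W$ whose horizontal divergence reproduces the weight $d^{\alpha}$ on the right-hand side and which, when paired with $\nabla_H f$, reconstructs the quantity $\nabla_H d\cdot\nabla_H f$ on the left. The natural candidate is
$$W:=d^{\alpha+1}\frac{\nabla_H d}{|\nabla_H d|^{2}}=\phi\,\nabla_H d,\qquad \phi:=\frac{d^{\alpha+1}}{|\nabla_H d|^{2}},$$
so that $W\cdot\nabla_H f=d^{\alpha+1}\dfrac{\nabla_H d\cdot\nabla_H f}{|\nabla_H d|^{2}}$ already carries the correct structure.

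The key step is to compute ${\rm div}_{H} W=\nabla_H\phi\cdot\nabla_H d+\phi\,\L d$ using two identities. From $\L u=0$ on $\G\backslash\{0\}$ with $u=d^{2-Q}$ and the chain rule $\L(F(d))=F''(d)|\nabla_H d|^{2}+F'(d)\L d$, one obtains $\L d=\frac{Q-1}{d}|\nabla_H d|^{2}$, hence $\phi\,\L d=(Q-1)d^{\alpha}$. For the remaining term, differentiating $\phi$ gives
$$\nabla_H\phi\cdot\nabla_H d=(\alpha+1)d^{\alpha}-\frac{d^{\alpha+1}}{|\nabla_H d|^{4}}\langle\nabla_H(|\nabla_H d|^{2}),\nabla_H d\rangle,$$
and here the polarizability hypothesis $\L_{\G,\infty}d=\frac12\langle\nabla_H(|\nabla_H d|^{2}),\nabla_H d\rangle=0$ annihilates the second term exactly. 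This is the crux of the argument: it is precisely the $\infty$-harmonicity of $d$ that removes the otherwise uncontrollable cross term and yields the clean identity ${\rm div}_{H} W=(Q+\alpha)d^{\alpha}$.

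With this identity I would integrate by parts against $|f|^{p}$ (legitimate since $f\in C_{0}^{\infty}(\G\backslash\{0\})$), obtaining
$$(Q+\alpha)\int_\G d^{\alpha}|f|^{p}\,dx=-\int_\G W\cdot\nabla_H(|f|^{p})\,dx=-p\,{\rm Re}\int_\G |f|^{p-2}\overline{f}\,(W\cdot\nabla_H f)\,dx.$$
Bounding the real part by the modulus and then applying Hölder's inequality with exponents $p/(p-1)$ and $p$, after the splitting $d^{\alpha+1}=d^{\alpha(p-1)/p}\,d^{(\alpha+p)/p}$, yields
$$(Q+\alpha)\int_\G d^{\alpha}|f|^{p}\,dx\le p\left(\int_\G d^{\alpha}|f|^{p}\,dx\right)^{\frac{p-1}{p}}\left(\int_\G d^{\alpha+p}\frac{|\nabla_H d\cdot\nabla_H f|^{p}}{|\nabla_H d|^{2p}}\,dx\right)^{\frac1p}.$$
Since $\alpha>-Q$ makes $Q+\alpha>0$, dividing by the first factor and raising to the $p$-th power gives exactly \eqref{K_Hardy_ineq}.

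For the sharpness of the constant $\left(\frac{Q+\alpha}{p}\right)^{p}$ I would trace the equality case in Hölder's inequality: it forces $|f|$ to be, up to constants, the power $d^{-(Q+\alpha)/p}$, which is homogeneous but not compactly supported away from $0$. The standard remedy is to take suitable power/logarithmic cut-offs of this profile and verify that the ratio of the two sides of \eqref{K_Hardy_ineq} converges to the constant. I expect the divergence computation, with its exact cancellation forced by polarizability, to be the main structural point, while the sharpness step is the only genuinely technical part, requiring careful control of the cut-off errors near $0$ and near infinity; this is where the restriction $1<p<Q$ enters.
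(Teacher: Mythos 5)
The paper does not prove this statement at all: Theorem \ref{K_Hardy} is imported verbatim from \cite[Theorem 3.1]{K10} and used as a black box in the proof of Theorem \ref{Rellich_thm_pol}. Your argument is, in substance, the standard proof of that cited result, and the computations check out: $\L u=0$ away from the origin gives $\L d=(Q-1)|\nabla_{H}d|^{2}/d$ (this part needs no polarizability), the $\infty$-harmonicity of $d$ kills exactly the cross term $\langle\nabla_{H}(|\nabla_{H}d|^{2}),\nabla_{H}d\rangle$ in $\nabla_{H}\phi\cdot\nabla_{H}d$, the divergence identity ${\rm div}_{H}W=(Q+\alpha)d^{\alpha}$ follows, and the H\"older split $d^{\alpha+1}=d^{\alpha(p-1)/p}d^{(\alpha+p)/p}$ produces precisely the two sides of \eqref{K_Hardy_ineq}. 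Your identification of the extremal profile $d^{-(Q+\alpha)/p}$ from the equality case in H\"older is also correct.

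The one step you should not wave through is the integration by parts. The vector field $W=d^{\alpha+1}\nabla_{H}d/|\nabla_{H}d|^{2}$ is singular not only at the origin but on the whole characteristic set $\Sigma=\{x\neq 0:\nabla_{H}d(x)=0\}$, which is nonempty in every non-abelian example (on the Heisenberg group it is the centre minus the origin) and in general meets the support of $f$. Your identity ${\rm div}_{H}W=(Q+\alpha)d^{\alpha}$ is a pointwise computation valid only off $\Sigma$, so the step $\int_{\G}({\rm div}_{H}W)|f|^{p}\,dx=-\int_{\G}W\cdot\nabla_{H}(|f|^{p})\,dx$ requires an argument that the distributional horizontal divergence of $W$ carries no singular part on $\Sigma$ — e.g.\ by replacing $|\nabla_{H}d|^{2}$ with $|\nabla_{H}d|^{2}+\epsilon$ and passing to the limit, or by checking local integrability of $W$ and of both integrands across $\Sigma$ together with a cut-off argument. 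This is a known technical point in this literature rather than a flaw in the strategy, but as written your proof asserts the identity globally on $\G\setminus\{0\}$, which is where a referee would push back.
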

\begin{thm}[{\cite[Theorem 4.1]{K10}} or {\cite[Corollary 3.1]{YKG17}}]
\label{K_Rellich} Let $\G$ be a Carnot group with homogeneous norm $d=u^{1/(2-Q)}$ and $Q\geq3$. Let $\alpha\in\mathbb{R}$ be such that $Q+\alpha-4>0$. Then we have for $f\in C_{0}^{\infty}(\G\backslash\{0\})$
\begin{equation}\label{K_Rellich_ineq}
\int_{\G}\frac{d^{\alpha}}{|\nabla_{H}d|^{2}}|\L_{0} f|^{2}dx\geq \frac{(Q+\alpha-4)^{2}(Q-\alpha)^{2}}{16}\int_{\G}d^{\alpha}
\frac{|\nabla_{H}d|^{2}}{d^{4}}|f|^{2}dx,
\end{equation}
with sharp constant.
\end{thm}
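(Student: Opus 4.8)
The plan is to transfer the drift problem to the driftless sub-Laplacian $\L_{0}$ via the isometry \eqref{muX} and the conjugation identity \eqref{identity_drift}, and then to treat separately the resulting Rellich-type, Hardy-type and first-order cross terms. Writing $f=\chi^{-1/2}g$ with $g\in C_{0}^{\infty}(\G\backslash\{0\})$, the isometry turns the left-hand side into $\left\|\frac{d^{\theta}}{|\nabla_{H}d|}(\L_{0}+\gamma^{2}b_{X}^{2})g\right\|_{L^{2}(\G,\mu)}^{2}$; expanding the square and integrating by parts (this is exactly the computation leading to \eqref{polar1}) produces four pieces: a pure Rellich term $\left\|\frac{d^{\theta}}{|\nabla_{H}d|}\L_{0}g\right\|^{2}$, a gradient term $2\gamma^{2}b_{X}^{2}\left\|\frac{d^{\theta}}{|\nabla_{H}d|}\nabla_{H}g\right\|^{2}$, the zeroth-order term $\gamma^{4}b_{X}^{4}\left\|\frac{d^{\theta}}{|\nabla_{H}d|}g\right\|^{2}$, and the first-order cross terms $I_{1}+I_{2}$.

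Second, I would estimate the first two pieces by the known weighted inequalities on polarizable groups. The Rellich inequality (Theorem \ref{K_Rellich}) with $\alpha=2\theta$, admissible since $Q+2\theta-4>0$, bounds the pure Rellich term below by $\frac{(Q+2\theta-4)^{2}(Q-2\theta)^{2}}{16}\int_{\G}d^{2\theta}\frac{|\nabla_{H}d|^{2}}{d^{4}}|g|^{2}dx$, which is the leading term after returning to $f$. For the gradient term I would use Cauchy--Schwarz in the form $|\nabla_{H}g|^{2}\geq|\nabla_{H}d\cdot\nabla_{H}g|^{2}/|\nabla_{H}d|^{2}$ and then the Hardy inequality (Theorem \ref{K_Hardy}) with $p=2$ and $\alpha=2\theta-2$ (admissible since $Q+2\theta-2>0$), obtaining the lower bound $2\gamma^{2}b_{X}^{2}\left(\frac{Q+2\theta-2}{2}\right)^{2}\int_{\G}d^{2\theta-2}|g|^{2}dx$.

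Third --- and this is the heart of the argument --- I would evaluate the cross terms exactly. Since ${\rm Re}(X_{j}g\,\overline{g})=\frac{1}{2}X_{j}|g|^{2}$, one can combine $I_{1}+I_{2}$ into $\gamma^{2}b_{X}^{2}\int_{\G}\nabla_{H}(|g|^{2})\cdot\nabla_{H}W\,dx$ with $W:=d^{2\theta}|\nabla_{H}d|^{-2}$, and integrating by parts (each $X_{j}$ has vanishing divergence against the Haar measure) this equals $-\gamma^{2}b_{X}^{2}\int_{\G}|g|^{2}\,\L W\,dx$. The key step is to compute $\L W$ on a polarizable group. Writing $\psi:=d^{1-Q}|\nabla_{H}d|$, I would use the harmonicity $\L(d^{2-Q})=0$, which yields $\L d=(Q-1)|\nabla_{H}d|^{2}/d$, together with the polarizability identity \eqref{identity_polar}, which after substituting $u=d^{2-Q}$ and $|\nabla_{H}u|=(Q-2)\psi$ becomes $\nabla_{H}d\cdot\nabla_{H}\psi=-(Q-1)d^{2Q-3}\psi^{3}$. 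Feeding these into the product-rule expansion of $\L W=\L(d^{2\theta-2Q+2}\psi^{-2})$ collapses every term proportional to $d^{2\theta-2}$ into explicit constants, leaving precisely a multiple of $\int_{\G}d^{2\theta-2}|g|^{2}dx$ plus the remainder $2\gamma^{2}b_{X}^{2}\int_{\G}\frac{d^{2\theta+2Q-2}}{|\nabla_{H}d|^{4}}(\psi\L\psi-3|\nabla_{H}\psi|^{2})|g|^{2}dx$, which is exactly the last integral in \eqref{Rellich_pol}. The main obstacle is the sign bookkeeping in this expansion; polarizability is indispensable, since without \eqref{identity_polar} the term $\nabla_{H}d\cdot\nabla_{H}\psi$ would not reduce to a clean power of $d$ and the correction integral would not close up.

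Finally, I would add the three lower bounds and the exact cross-term identity, and convert back to $f$ using $|g|^{2}dx=|f|^{2}d\mu_{X}$ and the isometry \eqref{muX}. The coefficients of $\int_{\G}d^{2\theta-2}|g|^{2}dx$ coming from the Hardy step and from $\L W$ combine into a single constant, which one checks equals $\frac{(Q+2\theta-2)(Q-2\theta-2)}{2}+2(Q-1)(3Q-4)$ by an elementary algebraic rearrangement, yielding the two middle terms of \eqref{Rellich_pol}. The pure Rellich term, the $\gamma^{4}b_{X}^{4}$ term and the correction integral match directly, which completes the inequality.
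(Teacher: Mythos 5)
Your proposal does not prove the statement it was assigned: it is a proof of Theorem \ref{Rellich_thm_pol} (the drift inequality \eqref{Rellich_pol}), and in its second step it explicitly \emph{invokes} Theorem \ref{K_Rellich} with $\alpha=2\theta$ as a known ingredient. As an argument for \eqref{K_Rellich_ineq} itself this is circular: nowhere do you derive the bound $\int_{\G}\frac{d^{\alpha}}{|\nabla_{H}d|^{2}}|\L_{0}f|^{2}dx\geq\frac{(Q+\alpha-4)^{2}(Q-\alpha)^{2}}{16}\int_{\G}d^{\alpha-4}|\nabla_{H}d|^{2}|f|^{2}dx$, nor address sharpness of the constant, and those two items are the entire content of the theorem. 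In the paper this result is quoted from \cite{K10} and \cite{YKG17} without proof, so there is no internal argument to reconstruct; what you have reconstructed instead --- quite faithfully, down to the decomposition into the Rellich, Hardy, $\gamma^{4}b_{X}^{4}$ and cross terms $I_{1}+I_{2}$ and the evaluation of the cross terms via \eqref{identity_polar} --- is the paper's proof of the theorem that \emph{consumes} this statement. Two features of the statement signal the mismatch: it involves no drift, no $\gamma$, and no measure $\mu_{X}$; and it is asserted for a general Carnot group with $d=u^{1/(2-Q)}$, so the polarizability identity \eqref{identity_polar}, which you call indispensable, cannot be used without losing the stated generality.

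A genuine proof has to work directly with $\L_{0}$ and the weight $d^{\alpha}/|\nabla_{H}d|^{2}$. The standard route (that of \cite{K10}, and of \cite{YKG17} in the general Carnot setting) starts from the $\L$-harmonicity of $u=d^{2-Q}$ away from the origin, which yields $\L d=(Q-1)d^{-1}|\nabla_{H}d|^{2}$ and hence $\L d^{\beta}=\beta(\beta+Q-2)\,d^{\beta-2}|\nabla_{H}d|^{2}$ for every $\beta\in\mathbb{R}$ on any Carnot group. One then expresses $\int_{\G}d^{\alpha-4}|\nabla_{H}d|^{2}|f|^{2}dx$ as a multiple of $\int_{\G}\L(d^{\alpha-2})|f|^{2}dx$, integrates by parts twice using $\L|f|^{2}=2\,{\rm Re}(\overline{f}\,\L f)+2|\nabla_{H}f|^{2}$ to bring in $\L_{0}f$, absorbs the gradient term by the weighted Hardy inequality \eqref{K_Hardy_ineq}, and closes with a weighted Cauchy--Schwarz and an optimization in the free parameter, which produces exactly the constant $\frac{(Q+\alpha-4)^{2}(Q-\alpha)^{2}}{16}$; sharpness then requires an extremizing sequence, e.g.\ suitable truncations of powers $d^{-(Q-\alpha)/2}$. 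None of these steps appears in your write-up, so as a proof of Theorem \ref{K_Rellich} the proposal has a complete gap, even though, read as a blind reconstruction of the proof of Theorem \ref{Rellich_thm_pol}, it matches the paper's argument essentially step for step.
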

Then, by Theorem \ref{K_Hardy}, one has for $Q+2\theta-2>0$
\begin{equation}\label{polar1_Hardy}
2\gamma^{2}b_{X}^{2}\left\|\frac{d^{\theta}}{|\nabla_{H}d|}\nabla_{H}g\right\|^{2}_{L^{2}(\G,\mu)}\geq
2\gamma^{2}b_{X}^{2}\left(\frac{Q+2\theta-2}{2}\right)^{2}\left\|d^{\theta-1}g\right\|^{2}_{L^{2}(\G,\mu)}.
\end{equation}
Now using Theorem \ref{K_Rellich}, we get for $Q+2\theta-4>0$
\begin{equation}\label{polar1_Rellich}
\left\|\frac{d^{\theta}}{|\nabla_{H}d|}\L_{0} g\right\|^{2}_{L^{2}(\G,\mu)}\geq
\frac{(Q+2\theta-4)^{2}(Q-2\theta)^{2}}{16}\left\|d^{\theta-2}|\nabla_{H}d|g\right\|^{2}_{L^{2}(\G,\mu)}.
\end{equation}
Putting \eqref{polar1_Hardy} and \eqref{polar1_Rellich} into \eqref{polar1}, one obtains for $Q+2\theta-4>0$
$$\left\|\frac{d^{\theta}}{|\nabla_{H}d|}\L_{X} f\right\|^{2}_{L^{2}(\G,\mu_{X})}
\geq\frac{(Q+2\theta-4)^{2}(Q-2\theta)^{2}}{16}\left\|d^{\theta-2}|\nabla_{H}d|g\right\|^{2}_{L^{2}(\G,\mu)}$$
\begin{equation}\label{polar1_HR}
+2\gamma^{2}b_{X}^{2}\left(\frac{Q+2\theta-2}{2}\right)^{2}\left\|d^{\theta-1}g\right\|^{2}_{L^{2}(\G,\mu)}
+\gamma^{4}b_{X}^{4}\left\|\frac{d^{\theta}}{|\nabla_{H}d|}g\right\|^{2}_{L^{2}(\G,\mu)}
+I_{1}+I_{2}.
\end{equation}
Let us calculate $I_{1}$ from \eqref{polar1}:
\begin{equation*}
\begin{split}
I_{1}&=4\gamma^{2}b_{X}^{2}\theta{\rm Re}\sum_{j=1}^{N}\int_{\G}
X_{j}g(x)\overline{g(x)}\frac{d^{2\theta-1}X_{j}d}{|\nabla_{H}d|^{2}}dx\\&=
-4\gamma^{2}b_{X}^{2}\theta{\rm Re}\sum_{j=1}^{N}\int_{\G}
g(x)\overline{X_{j}g(x)}\frac{d^{2\theta-1}X_{j}d}{|\nabla_{H}d|^{2}}dx\\&\quad
-4\gamma^{2}b_{X}^{2}\theta{\rm Re}\sum_{j=1}^{N}\int_{\G}
|g(x)|^{2}X_{j}\left(\frac{d^{2\theta-1}X_{j}d}{|\nabla_{H}d|^{2}}\right)dx.
\end{split}
\end{equation*}
It follows that
\begin{multline}\label{polar2}I_{1}=4\gamma^{2}b_{X}^{2}\theta{\rm Re}\sum_{j=1}^{N}\int_{\G}
X_{j}g(x)\overline{g(x)}\frac{d^{2\theta-1}X_{j}d}{|\nabla_{H}d|^{2}}dx\\=
-2\gamma^{2}b_{X}^{2}\theta\sum_{j=1}^{N}\int_{\G}
|g(x)|^{2}X_{j}\left(\frac{d^{2\theta-1}X_{j}d}{|\nabla_{H}d|^{2}}\right)dx.
\end{multline}
Putting $d=u^{\frac{1}{2-Q}}$ we calculate
\begin{equation*}
\begin{split}
\sum_{j=1}^{N}X_{j}\left(\frac{d^{2\theta-1}X_{j}d}{|\nabla_{H}d|^{2}}\right)&=
(2-Q)\sum_{j=1}^{N}X_{j}\left(\frac{u^{\frac{2\theta-Q}{2-Q}}}{|\nabla_{H}u|^{2}}X_{j}u\right)\\&
=(2-Q)\sum_{j=1}^{N}\frac{2\theta-Q}{2-Q}u^{\frac{2\theta-2}{2-Q}}\frac{(X_{j}u)^{2}}{|\nabla_{H}u|^{2}}
+(2-Q)\sum_{j=1}^{N}\frac{u^{\frac{2\theta-Q}{2-Q}}X_{j}^{2}u}{|\nabla_{H}u|^{2}}\\ & \quad
-2(2-Q)\sum_{j=1}^{N}\frac{u^{\frac{2\theta-Q}{2-Q}}X_{j}u}{|\nabla_{H}u|^{3}}X_{j}|\nabla_{H}u|,
\end{split}
\end{equation*}
which implies, using \eqref{identity_polar}, that
\begin{equation}\label{polar21}
\begin{split}
I_{1}&=-2\gamma^{2}b_{X}^{2}\theta\int_{\G}\left(\sum_{j=1}^{N}X_{j}
\left(\frac{d^{2\theta-1}X_{j}d}{|\nabla_{H}d|^{2}}\right)\right)|g(x)|^{2}dx
\\&
=-2\gamma^{2}b_{X}^{2}\theta(2\theta+Q-2)\int_{\G}u^{\frac{2\theta-2}{2-Q}}
|g(x)|^{2}dx\\&
=-2\gamma^{2}b_{X}^{2}\theta(2\theta+Q-2)\int_{\G}|g(x)|^{2}d^{2\theta-2}dx.
\end{split}
\end{equation}
Now for $I_{2}$ using the integration by parts we obtain
\begin{equation}\label{polar3}
\begin{split}
I_{2}&=-4\gamma^{2}b_{X}^{2}{\rm Re}\sum_{j=1}^{N}\int_{\G}
X_{j}g(x)\overline{g(x)}\frac{d^{2\theta}X_{j}|\nabla_{H}d|}{|\nabla_{H}d|^{3}}dx\\&
=2\gamma^{2}b_{X}^{2}\sum_{j=1}^{N}\int_{\G}
|g(x)|^{2}X_{j}\left(\frac{d^{2\theta}X_{j}|\nabla_{H}d|}{|\nabla_{H}d|^{3}}\right)dx.
\end{split}
\end{equation}
Using $d=u^{\frac{1}{2-Q}}$ one has
$$\sum_{j=1}^{N}X_{j}\left(\frac{d^{2\theta}X_{j}|\nabla_{H}d|}{|\nabla_{H}d|^{3}}\right)
=(2-Q)^{2}\sum_{j=1}^{N}X_{j}\left(\frac{u^{\frac{2\theta-3Q+3}{2-Q}}}
{|\nabla_{H}u|^{3}}X_{j}\left(u^{\frac{Q-1}{2-Q}}|\nabla_{H}u|\right)\right)$$
$$=(2-Q)^{2}\sum_{j=1}^{N}X_{j}\left(\frac{Q-1}{2-Q}u^{\frac{2\theta-Q}{2-Q}}\frac{X_{j}u}{|\nabla_{H}u|^{2}}
+u^{\frac{2\theta-2Q+2}{2-Q}}\frac{X_{j}|\nabla_{H}u|}{|\nabla_{H}u|^{3}}\right)$$
\begin{equation}\label{polar31}
:=(2-Q)^{2}J_{1}+(2-Q)^{2}J_{2}.
\end{equation}
Then, taking into account \eqref{identity_polar} we have for $J_{1}$
$$J_{1}=\sum_{j=1}^{N}X_{j}\left(\frac{Q-1}{2-Q}u^{\frac{2\theta-Q}{2-Q}}\frac{X_{j}u}{|\nabla_{H}u|^{2}}\right)
=\frac{(Q-1)(2\theta-Q)}{(Q-2)^{2}}u^{\frac{2\theta-2}{2-Q}}\sum_{j=1}^{N}\frac{(X_{j}u)^{2}}{|\nabla_{H}u|^{2}}$$
$$+\frac{Q-1}{2-Q}u^{\frac{2\theta-Q}{2-Q}}\sum_{j=1}^{N}\frac{X_{j}^{2}u}{|\nabla_{H}u|^{2}}
-\frac{2(Q-1)}{(2-Q)}u^{\frac{2\theta-Q}{2-Q}}\sum_{j=1}^{N}\frac{X_{j}uX_{j}|\nabla_{H}u|}{|\nabla_{H}u|^{3}}$$
\begin{equation}\label{polar32}
=\frac{(Q-1)(2\theta-Q)}{(Q-2)^{2}}u^{\frac{2\theta-2}{2-Q}}
+\frac{Q-1}{2-Q}u^{\frac{2\theta-Q}{2-Q}}\frac{\L u}{|\nabla_{H}u|^{2}}
+\frac{2(Q-1)^{2}}{(Q-2)^{2}}u^{\frac{2\theta-2}{2-Q}}.
\end{equation}
Now we calculate for $J_{2}$
$$J_{2}=\sum_{j=1}^{N}X_{j}\left(u^{\frac{2\theta-2Q+2}{2-Q}}\frac{X_{j}|\nabla_{H}u|}{|\nabla_{H}u|^{3}}\right)
=\frac{2\theta-2Q+2}{2-Q}u^{\frac{2\theta-Q}{2-Q}}\sum_{j=1}^{N}\frac{X_{j}uX_{j}|\nabla_{H}u|}{|\nabla_{H}u|^{3}}$$
$$+u^{\frac{2\theta-2Q+2}{2-Q}}\sum_{j=1}^{N}\frac{X_{j}^{2}|\nabla_{H}u|}{|\nabla_{H}u|^{3}}
-3u^{\frac{2\theta-2Q+2}{2-Q}}\sum_{j=1}^{N}\frac{(X_{j}|\nabla_{H}u|)^{2}}{|\nabla_{H}u|^{4}}$$
\begin{equation}\label{polar33}
=\frac{2\theta-2Q+2}{2-Q}\left(\frac{Q-1}{Q-2}\right)u^{\frac{2\theta-2}{2-Q}}
+u^{\frac{2\theta-2Q+2}{2-Q}}\frac{\L|\nabla_{H}u|}{|\nabla_{H}u|^{3}}
-3u^{\frac{2\theta-2Q+2}{2-Q}}\frac{|\nabla_{H}|\nabla_{H}u||^{2}}{|\nabla_{H}u|^{4}},
\end{equation}
where we have used \eqref{identity_polar} in the last equality. Plugging \eqref{polar32} and \eqref{polar33}
into \eqref{polar31}, we obtain
$$\sum_{j=1}^{N}X_{j}\left(\frac{d^{2\theta}X_{j}|\nabla_{H}d|}{|\nabla_{H}d|^{3}}\right)
=(Q-1)(3Q-4)u^{\frac{2\theta-2}{2-Q}}
+(2-Q)(Q-1)\frac{\L u}{|\nabla_{H}u|^{2}}
$$$$
+(Q-2)^{2}u^{\frac{2\theta-2Q+2}{2-Q}}|\nabla_{H}u|^{-4}(|\nabla_{H}u|\L|\nabla_{H}u|-3|\nabla_{H}|\nabla_{H}u||^{2}).$$
Then, we have for $I_{2}$
$$I_{2}
=2\gamma^{2}b_{X}^{2}\sum_{j=1}^{N}\int_{\G}
|g(x)|^{2}X_{j}\left(\frac{d^{2\theta}X_{j}|\nabla_{H}d|}{|\nabla_{H}d|^{3}}\right)dx$$
$$=2\gamma^{2}b_{X}^{2}(Q-1)(3Q-4)\int_{\G}
u^{\frac{2\theta-2}{2-Q}}|g(x)|^{2}dx$$
$$+2\gamma^{2}b_{X}^{2}(Q-2)^{2}\int_{\G}
u^{\frac{2\theta-2Q+2}{2-Q}}|\nabla_{H}u|^{-4}(|\nabla_{H}u|\L|\nabla_{H}u|-3|\nabla_{H}|\nabla_{H}u||^{2})|g(x)|^{2}dx.$$
Setting here $u=d^{2-Q}$, we get for $I_{2}$
\begin{equation}\label{polar9}
\begin{split}I_{2}&=
2\gamma^{2}b_{X}^{2}(Q-1)(3Q-4)\int_{\G}
d^{2\theta-2}|g(x)|^{2}dx
\\&
+2\gamma^{2}b_{X}^{2}\int_{\G}
\frac{d^{2\theta+2Q-2}}{|\nabla_{H}d|^{4}}
\left(d^{1-Q}|\nabla_{H}d|\L(d^{1-Q}|\nabla_{H}d|)-3|\nabla_{H}(d^{1-Q}|\nabla_{H}d|)|^{2}\right)|g(x)|^{2}dx.
\end{split}
\end{equation}
Thus, by \eqref{polar21} and \eqref{polar9}, one has
$$I_{1}+I_{2}=2\gamma^{2}b_{X}^{2}((Q-1)(3Q-4)-\theta(2\theta+Q-2))\left\|d^{\theta-1}g\right\|^{2}_{L^{2}(\G,\mu)}
$$
\begin{equation*}\label{polar10}
+2\gamma^{2}b_{X}^{2}\int_{\G}
\frac{d^{2\theta+2Q-2}}{|\nabla_{H}d|^{4}}
\left(d^{1-Q}|\nabla_{H}d|\L(d^{1-Q}|\nabla_{H}d|)-3|\nabla_{H}(d^{1-Q}|\nabla_{H}d|)|^{2}\right)|g(x)|^{2}dx.
\end{equation*}
Combining this with \eqref{polar1_HR} and taking into account \eqref{muX} we obtain \eqref{Rellich_thm_pol}.
\end{proof}


\begin{thebibliography}{HOHOLT08}


\bibitem[AGS06]{AGS06}
Adimurthi, M.~Grossi and S.~Santra.
\newblock Optimal Hardy-Rellich inequalities, maximum principle and related eigenvalue
problem.
\newblock {\em J. Funct. Anal. }, 240(1):36-83, 2006.

\bibitem[Bar07]{B07}
G.~Barbatis.
\newblock Best constants for higher-order Rellich inequalities in $L^{p}(\Omega)$.
\newblock {\em Math. Z.}, 255(4):877-896, 2007.

\bibitem[BEL15]{BEL15}
A.~Balinsky, W.~D.~Evans and R.~T.~Lewis.
\newblock {\em The Analysis and Geometry of Hardy's Inequality}.
\newblock Springer, New York, 2015.

\bibitem[BT02]{BT02}
Z.~Balogh and J.~Tyson.
\newblock Polar coordinates on Carnot groups.
\newblock {\em Math. Z.}, 241:697--730, 2002.

\bibitem[BT06]{BT06}
G.~Barbatis and A.~Tertikas.
\newblock On a class of Rellich inequalities.
\newblock {\em J. Comput. Appl. Math.}, 194(1):156-172, 2006.

\bibitem[DH98]{DH98}
E.~B.~Davies and A.~M.~Hinz.
\newblock Explicit constants for Rellich inequalities in $L^{p}(\Omega)$.
\newblock {\em Math. Z.}, 227:511--523, 1998.

\bibitem[EE16]{EE16}
D.~E.~Edmunds and W.~D.~Evans.
\newblock The Rellich inequality.
\newblock {\em Rev. Mat. Complut.}, 29(3):511--530, 2016.

\bibitem[Fol75]{F75}
G.~B. Folland.
\newblock Subelliptic estimates and function spaces on nilpotent {L}ie groups.
\newblock {\em Ark. Mat.}, 13(2):161--207, 1975.

\bibitem[FR16]{FR16}
V.~Fischer and M.~Ruzhansky.
\newblock {\em Quantization on nilpotent {L}ie groups}, volume 314 of {\em
  Progress in Mathematics}.
\newblock Birkh\"auser/Springer, [Open access book], 2016.

\bibitem[FS82]{FS-book}
G.~B. Folland and E.~M. Stein.
\newblock {\em Hardy spaces on homogeneous groups}, volume~28 of {\em
  Mathematical Notes}.
\newblock Princeton University Press, Princeton, N.J.; University of Tokyo
  Press, Tokyo, 1982.

\bibitem[HMM04]{HMM04}
W.~Hebisch, G.~Mauceri and S.~Meda.
\newblock Holomorphy of spectral multipliers of the Ornstein-Uhlenbeck operator.
\newblock {\em J. Funct. Anal.}, 210(1):101--124, 2004.

\bibitem[HMM05]{HMM05}
W.~Hebisch, G.~Mauceri and S.~Meda.
\newblock Spectral multipliers for sub-Laplacians with drift on Lie groups.
\newblock {\em Math. Z.}, 251(4):899--927, 2005.

\bibitem[JS11]{JS11}
Y.~Jin and S.~Shen.
\newblock Weighted Hardy and Rellich inequality on Carnot groups.
\newblock {\em Arch. Math. (Basel)}, 96(3):263--271, 2011.

\bibitem[Kom06]{K06}
I.~Kombe.
\newblock Hardy, Rellich and Uncertainty principle inequalities on Carnot Groups.
\newblock {\em arXiv:math/0611850}, 2006.

\bibitem[Kom10]{K10}
I.~Kombe.
\newblock Sharp weighted Rellich and uncertainty principle inequalities on Carnot groups.
\newblock {\em Commun. Appl. Anal.}, 14:251--271, 2010.

\bibitem[Lia13]{L13}
B.~Lian.
\newblock Some sharp Rellich type inequalities on nilpotent groups and application.
\newblock {\em Acta Math. Sci. Ser. B Engl. Ed.}, 33(1):59--74, 2013.

\bibitem[MOV17]{MOV17}
A.~Martini, A.~Ottazzi and M.~Vallarino.
\newblock A multiplier theorem for sub-Laplacians with drift on Lie groups.
\newblock {\em  arXiv:1705.04752}, 2017.

\bibitem[Rel56]{R56}
F.~Rellich.
\newblock Halbbeschränkte Differentialoperatoren höherer Ordnung.
\newblock In {\em Gerretsen, J.C.H., de Groot,
J. (eds.) Proc. Int. Nat. Congress of Mathematicians 1954}, volume III, pages 243--250. Noordhoff, Groningen, 1956.

\bibitem[RS16]{RS16}
M.~Ruzhansky and D.~Suragan.
\newblock Hardy and Rellich inequalities, identities, and sharp remainders on homogeneous groups.
\newblock {\em arXiv:1603.06239}, 2016.

\bibitem[RS17a]{RS17a}
M.~Ruzhansky and D.~Suragan.
On horizontal Hardy, Rellich, Caffarelli-Kohn-Nirenberg and $p$-sub-Laplacian inequalities on stratified groups.
{\em J. Differential Equations}, 262:1799--1821, 2017.

\bibitem[RS17b]{RS17b}
M.~Ruzhansky and D.~Suragan.
\newblock Layer potentials, {K}ac's problem, and refined {H}ardy inequality on
homogeneous {C}arnot groups.
\newblock {\em Adv. Math.}, 308:483--528, 2017.

\bibitem[RS17c]{RS17c}
M.~Ruzhansky and D.~Suragan.
Local Hardy and Rellich inequalities for sums of squares of vector fields.
{\em Adv. Diff. Equations}, 22:505--540, 2017.

\bibitem[RSY16]{RSY16}
M.~Ruzhansky, D.~Suragan and N.~Yessirkegenov.
\newblock Sobolev inequalities, Euler-Hilbert-Sobolev and Sobolev-Lorentz-Zygmund spaces on homogeneous groups.
\newblock {\em 	 arXiv:1610.03379v2}, 2016.

\bibitem[SS17]{SS17}
B.~Sabitbek and D.~Suragan.
\newblock Horizontal weighted Hardy-Rellich type Inequalities
on stratified Lie groups.
\newblock {\em Complex Anal. Oper. Theory}, http://dx.doi.org/10.1007/s11785-017-0650-z.

\bibitem[TZ07]{TZ07}
A.~Tertikas and N.~B.~Zographopoulos.
\newblock Best constants in the Hardy-Rellich inequalities and related improvements.
\newblock {\em Adv. Math.}, 209(2):407-459, 2007.

\bibitem[Yan08]{Y08}
Q.~Yang.
\newblock Best constants in the Hardy-Rellich type inequalities on the Heisenberg group.
\newblock {\em J. Math. Anal. Appl.}, 342(1):423--431, 2008.

\bibitem[YH10]{YH10}
J.~Yongyang and Y.~Han.
\newblock Weighted Rellich inequality on H-type groups and nonisotropic Heisenberg groups.
\newblock {\em J. Inequal. Appl.}, Art. ID 158281, 17 pp, 2010.

\bibitem[YKG17]{YKG17}
A.~Yener, I.~Kombe and J.~Goldstein.
\newblock A unified approach to weighted Hardy type inequalities on Carnot groups.
\newblock {\em Discrete Contin. Dyn. Syst.}, 37(4):2009--2021, 2017.


\end{thebibliography}
\end{document}